\newtheorem{theorem}{Theorem}[section]
\newtheorem{lemma}[theorem]{Lemma}
\newtheorem{proposition}[theorem]{Proposition}
\theoremstyle{definition}
\newtheorem{remark}[theorem]{Remark}
\theoremstyle{definition}
\newtheorem{definition}[theorem]{Definition}
\newtheorem{example}[theorem]{Example}
\newcommand{\ssl}[2]{\textrm{SL}(#1,\mathbb{#2})}
\DeclareMathOperator{\GL}{GL}
\newcommand{\Z}{ \mathbb{Z}}
\newcommand{\C}{\mathbb{C}}
\newcommand{\reff}[1]{(\ref{#1})}
\newcommand{\bra}[1]{\left(#1\right)}
\renewcommand{\d}{\mathrm{d}}
\newcommand{\chibs}{\overline{\chi^*}}
\newcommand{\chis}{\chi^*}
\newcommand{\aquad}{\qquad\qquad}
\newcommand{\bquad}{\aquad\aquad}
\newcommand{\cquad}{\bquad\bquad}
\newcommand{\inv}{^{-1}}
\DeclareMathOperator{\Kl}{Kl}
\newcommand{\sums}{\sideset{}{^*}\sum}
\newcommand{\nn}{h}
\newcommand{\lt}{\left}
\newcommand{\rt}{\right}
\renewcommand{\vec}{\mathbf}
\renewcommand{\mod}{\textrm{ mod }}
\newcommand{\hhh}{\mathcal{H}}
\renewcommand{\ggg}{\mathcal{G}}
\newcommand{\TT}{T}
\begin{document}

\title{\scshape The Voronoi formula and double Dirichlet series}

\author{Eren Mehmet K{\i}ral and Fan Zhou}

\maketitle
\begin{abstract} 
We prove  a Voronoi formula for coefficients of a large class of $L$-functions including Maass cusp forms, Rankin-Selberg convolutions, and certain non-cuspidal forms. Our proof is based on the functional equations of $L$-functions twisted by Dirichlet characters and does not directly depend on automorphy. Hence it has wider application than previous proofs. The key ingredient  is the construction of a double Dirichlet series.
\\
\\
MSC: 11F30 (Primary), 11F68, 11L05
\\
\\
Keywords: Voronoi formula, automorphic form, Maass form, multiple Dirichlet series, Gauss sum, Kloosterman sum, Rankin-Selberg $L$-function
\end{abstract}
\section{Introduction}

A Voronoi formula is an identity involving Fourier coefficients of automorphic forms, with the coefficients  twisted by additive characters on either side.  A history of the Voronoi formula can be found in \cite{millerschmidHistory}. Since its introduction in \cite{millerschmid1}, the Voronoi formula on $\GL(3)$ of Miller and Schmid  has become a standard tool in the study of $L$-functions arising from $\GL(3)$, and has found important applications such as  \cite{blomerbuttcane}, \cite{blomerKhanYoung}, \cite{khan}, \cite{licentral}, \cite{lisubconvexity},  \cite{liyoung}, \cite{miller}, \cite{munshiShiftedConv} and \cite{munshi4}. 
As of yet the general $\GL(N)$ formula has had fewer applications, a notable one being \cite{kowalskiGuillaume}.

The first proof of a Voronoi formula on $\GL(3)$ was found by Miller and Schmid in \cite{millerschmid1} using the theory of automorphic distributions.
Later, a Voronoi formula was established for GL($N$) with $N\geq 4$ in  \cite{goldfeldli1}, \cite{goldfeldli2}, and \cite{millerschmid2}, with \cite{millerschmid2} being more general and earlier than \cite{goldfeldli2} (see the addendum, loc.\! cit.\!). Goldfeld and Li's proof
\cite{goldfeldli2}
 is more akin to the classical proof in $\GL(2)$ (see \cite{good}), obtaining the associated Dirichlet series through a shifted ``vertical'' period integral and making use of automorphy. 
An adelic version was established by Ichino and Templier in \cite{ichinotemplier}, 
allowing ramifications and applications to number fields. Another direction of generalization with more complicated additive twists on either side has been considered in an unpublished work of Li and Miller and in  \cite{zhou}.

In this article, we prove a Voronoi formula for a large class of automorphic objects or $L$-functions, including cusp forms for $\ssl{N}{Z}$, Rankin-Selberg convolutions, and certain non-cuspidal forms.
Previous works (\cite{millerschmid2}, \cite{goldfeldli2}, \cite{ichinotemplier}) do not offer a Voronoi formula for Rankin-Selberg convolutions or non-cuspidal forms.
Even for Maass cusp forms, our new proof is shorter than any previous one, and uses a completely different set of techniques.

Let us briefly summarize our method of proof. 
We first reduce the statement of Voronoi formula to a formula involving Gauss sums of Dirichlet characters. 
We construct a complex function of two variables and write it as double Dirichlet series in two different ways by applying a functional equation. Using the uniqueness theorem of Dirichlet series, we get an identity between coefficients of these two double Dirichlet series.  This leads us to the Voronoi formula with Gauss sums.

One of our key steps in obtaining the Voronoi formula is the use of functional equations of $L$-functions twisted by Dirichlet characters. 
The relationship between the Voronoi formulas and the functional equations of these $L$-functions is known from previous works, such as  \cite{dukeiwaniec}, Section 4 of \cite{goldfeldli1}, \cite{buttcane} and \cite{zhou}. 
Miller-Schmid derived the functional equation of $L$-functions twisted by a Dirichlet character of prime conductor from the Voronoi formula in Section 6 of \cite{millerschmid1}.
However there is a combinatorial difficulty in reversing this process, i.e., obtaining additive 
 twists of general non-prime conductors from multiplicative ones, which was acknowledged in both  \cite[p. 430]{millerschmid1}  and \cite[p. 68]{ichinotemplier}. 
The method presented here is able to overcome this difficulty by discovering an interlocking structure among a family of Voronoi formulas with different conductors.

Our proof of the Voronoi formula is complete for additive twists of all conductors, prime or not, and unlike \cite{goldfeldli1}, \cite{goldfeldli2},\cite{ichinotemplier}, \cite{millerschmid1}, or  \cite{millerschmid2}, does not depend directly on automorphy of the cusp forms.  
This fact allows us to apply our theorem to many conjectural Langlands functorial transfers. For example, the Rankin-Selberg convolutions (also called functorial products) for $\GL(m)\times \GL(n)$ are not yet known to be automorphic on $\GL(m \times n)$ in general.
Yet  we know the functional equations of $\GL(m) \times \GL(n)$ $L$-functions twisted by Dirichlet characters. Thus, our proof provides a Voronoi formula for the Rankin-Selberg convolutions on $\GL(m) \times \GL(n)$ (see Example \ref{ex:rankinSelberg}). 
Voronoi formulas for these functorial cases are unavailable from \cite{goldfeldli2}, \cite{millerschmid2} or \cite{ichinotemplier}. 
In Theorem \ref{conversethm} we reformulate our Voronoi formula like the classical converse theorem of Weil, i.e., assuming every $L$-function twisted by Dirichlet character is entire, has an Euler product (or satisfies Hecke relations), and satisfies the precise functional equations, then the Voronoi formula as in Theorem \ref{thm:glnVoronoi} is valid. We do not have to assume it is a standard $L$-function coming from a cusp form.

Furthermore, by Theorem \ref{conversethm}, we obtain a Voronoi formula for certain non-cuspidal forms, such as isobaric sums (see Example \ref{ex:isobaric}). This is not readily available from any previous work but
it is believed (see \cite{millerschmid2} p.\!\! 176)  that one may  derive a formula by using formulas on smaller groups through a possibly complicated procedure. 
Such complication does not occur in our method because we work directly with $L$-functions.
\\

We first state the main results for Maass cusp forms.
Denote $e(x):=\exp(2\pi i x)$ for $x\in \mathbb{R}$.
Let $N\geq 3$ be an integer.
Let $a,n \in \Z$,  $c\in \mathbb{N}$ and let
	$$
		\vec{q}=  (q_1,q_2, \ldots, q_{N-2}) \qquad \text{ and } \qquad 
		\vec{d} = (d_1, d_2, \ldots, d_{N-2})
	$$
be tuples of positive integers satisfying the divisibility conditions 
\begin{equation}\label{eq:divisibilityConditions}
d_1|q_1c,\quad d_2\left|\frac{q_1q_2c}{d_1},\qquad \cdots,\qquad 
d_{N-2}\right|\frac{q_1\cdots q_{N-2}c}{d_1 \cdots d_{N-3}}.
\end{equation}

Define the hyper-Kloosterman sum as
	\begin{align*}
		\Kl(a,n,c;\vec{q},\vec{d}) =& \sums_{x_1 (\text{mod } \frac{q_1c}{d_1})} \,\,\,\,\,
		\sums_{x_2 (\text{mod } \frac{q_1q_2c}{d_1d_2})} \cdots 
		\sums_{x_{N-2} (\text{mod } \frac{q_1\cdots q_{N-2} c}{d_1 \cdots d_{N-2} })}\\
		&\times
		e\lt(\frac{d_1x_1a}{c} + \frac{d_2x_2 \overline{x_1}}{\frac{q_1c}{d_1}}
		+ \cdots + 
		\frac{d_{N-2}x_{N-2} \overline{x_{N-3}}}{\frac{q_1\cdots q_{N-3} c}{d_1 \cdots d_{N-3} }}
		+ \frac{n \overline{x_{N-2}}}{\frac{q_1\cdots q_{N-2} c}{d_1 \cdots d_{N-2} }}\rt),
	\end{align*}
	where $\sums$ indicates that the summation is over reduced residue classes, and 
	$\overline{x_i}$ denotes the multiplicative inverse of $x_i$ modulo $q_1\cdots q_i c/d_1 \cdots d_i$. 
	When $N=3$, $\Kl(a,n,c;q_1,d_1)$ becomes the classical Kloosterman sum 
	$S(aq_1,n ;cq_1/d_1)$. 
For the degenerate case of  $N=2$, we define $\Kl(a,n,c;\;,\;):=e(an/c)$.


Let $F$ be a Hecke-Maass cusp form for $\ssl{N}{Z}$ with the spectral parameters $(\lambda_1,\dots,\lambda_n)\in \mathbb{C}^n$. Let $A(*,\ldots,*)$ be the Fourier-Whittaker coefficients of $F$ normalized as $A(1,\ldots,1)=1.$
 We refer to \cite{goldfeld} for the definitions and the basic results of Maass forms for $\ssl{N}{Z}$. The Fourier coefficients satisfy the Hecke relations
\begin{equation}\label{glnheckerelation0}
A(m_1m_1', \cdots, m_{N-1}m_{N-1}') = A(m_1,\ldots,m_{N-1})A(m_1', \ldots, m_{N-1}')
\end{equation}
if $(m_1\cdots m_{N-1}, m_1' \cdots m_{N-1}') = 1$ is satisfied,
\begin{equation}\label{glnheckerelation1}
A(1,\dots,1,n)A(m_{N-1},\dots,m_1)=
\sum_{\substack{d_0\dots d_{N-1}=n\\d_1|m_1,\dots,d_{N-1}|m_{N-1}}}A\bra{\frac{m_{N-1}d_{N-2}}{d_{N-1}},\dots,  \frac{m_2d_1}{d_2} , \frac{m_1d_0}{d_1}},
\end{equation}
and
\begin{equation}\label{glnheckerelation2}
A(n,1,\dots,1)A(m_1,\dots,m_{N-1})=\sum_{\substack{d_0\dots d_{N-1}=n\\d_1|m_1,\dots,d_{N-1}|m_{N-1}}}A\bra{\frac{m_1d_0}{d_1},\frac{m_2d_1}{d_2},\dots,\frac{m_{N-1}d_{N-2}}{d_{N-1}}}.
\end{equation}

The dual Maass form of $F$ is denoted by $\widetilde{F}$. Let $B(*,\dots,*)$ be the Fourier-Whittaker coefficients of $\widetilde F$. These coefficients satisfy
\begin{equation}\label{eq:dualCoefficients}
B(m_1,\dots,m_{N-1})=A(m_{N-1},\dots,m_1).
\end{equation}

Define the ratio of Gamma factors
\begin{equation}\label{eq:plusminusGammaFactors}
	G_\pm(s):= {i^{-N\delta}}\pi^{-N(1/2-s)}\prod_{j=1}^N \Gamma\bra{\frac{\delta + 1-s-\overline{\lambda_j}}{2}}\Gamma\bra{\frac{\delta + s-{\lambda_j}}{2}}^{-1},
\end{equation}
where for even Maass forms, we define $\delta = 0$ in $G_+$ and $\delta  = 1 $ in $G_-$ and for odd Maass forms, we define $\delta = 1$ in $G_+$ and $\delta  = 0 $ in $G_-$. 	
We refer to Section 9.2 
of \cite{goldfeld} for the definition of even 
and odd Maass forms.

\begin{theorem}[Voronoi formula on GL($N$) of Miller-Schmid \cite{millerschmid2}]\label{thm:glnVoronoi}
Let $F$ be a Hecke-Maass cusp form with coefficients $A(*,\ldots, *)$, and $G_\pm$ a ratio of Gamma factors as in \eqref{eq:plusminusGammaFactors}. Let $c>0$ be an integer and let $a$ be any integer with $(a,c)=1$. Denote by $\overline{a}$ the multiplicative inverse of $a$ modulo $c$. 
Let the additively twisted Dirichlet series be given as 
\begin{equation}\label{eq:LqTwistedAdditively}
L_{\vec{q}}( s , F , a/c) = \sum_{n=1}^\infty \frac{A(q_{N-2},\dots, q_1,n)}{n^s}
e\bra{\frac{\overline{a}n}{c}}	
\end{equation}
for $\Re(s)>1$. 
	This Dirichlet series has an analytic continuation to all $s \in \C$ and satisfies the functional
	equation 
	\begin{align}\label{eq:glnVoronoi}
	\begin{split}
		L_{\vec{q}}(s,F,a/c) =& \frac{ G_+(s) - G_{-}(s) }{2} \sum_{d_1|q_1c}\sum_{d_2 | \frac{q_1q_2c}{d_1}} 
		\cdots \sum_{d_{N-2}| \frac{q_1\ldots q_{N-2} c}{d_1 \ldots d_{N-3}}}
		\\
		&\quad \times \sum_{n=1}^\infty 
		\frac{A(n,d_{N-2}, \ldots, d_2, d_1) \Kl(a,n, c; \vec{q}, \vec{d}) }
		{n^{1-s} c^{Ns-1}d_1 d_2 \cdots d_{N-2}} 
		\frac{d_1^{(N-1)s}d_2^{(N-2)s}\cdots d_{N-2}^{2s}}
		{q_1^{(N-2)s}q_2^{(N-3)s}\cdots q_{N-2}^{s}} 
		\\
		&+ \frac{G_+(s) + G_{-}(s)	}{2} 
\sum_{d_1|q_1c}\sum_{d_2 | \frac{q_1q_2c}{d_1}} 
		\cdots \sum_{d_{N-2}| \frac{q_1\ldots q_{N-2} c}{d_1 \ldots d_{N-3}}}		
		\\
		&\quad \times \sum_{n=1}^\infty 
		\frac{A(n,d_{N-2}, \ldots, d_2, d_1) \Kl(a,-n, c; \vec{q}, \vec{d}) }
		{n^{1-s} c^{Ns-1}d_1 d_2 \cdots d_{N-2}} 
		\frac{d_1^{(N-1)s}d_2^{(N-2)s}\cdots d_{N-2}^{2s}}
		{q_1^{(N-2)s}q_2^{(N-3)s}\cdots q_{N-2}^{s}},
		\end{split}
	\end{align}
	in the region of convergence of the expression on the right hand side ($\Re(s)<0$).
\end{theorem}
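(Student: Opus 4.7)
The overall plan is to reduce the additively twisted identity \eqref{eq:glnVoronoi} to a family of identities for Dirichlet-character-twisted $L$-functions $L(s, F \otimes \chi)$, and then to encode the whole family (for all conductors $c$) in a single double Dirichlet series so that a uniqueness argument extracts the desired formula.

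First I would open the additive character by Gauss-sum inversion: for each divisor $c' \mid c$ and each primitive $\chi^*$ modulo $c'$, the twist $e(\overline{a}n/c)$ admits an expansion as a linear combination of $\chi^*(n)\overline{\chi^*}(a)\tau(\overline{\chi^*})/\varphi(c')$, with the expansion separating $n$ according to its common part with $c$. Substituting this into \eqref{eq:LqTwistedAdditively} and applying the Hecke relation \eqref{glnheckerelation1} to decouple the $n$-variable from the fixed indices $(q_{N-2},\ldots,q_1)$, the series $L_{\vec q}(s, F, a/c)$ is rewritten as a finite combination of Dirichlet-character-twisted $L$-functions $L(s, F \otimes \chi^*)$, with arithmetic coefficients expressible through Gauss sums and divisors of $\vec q$ and $c$.

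Next I would apply the functional equation of each $L(s, F \otimes \chi^*)$ (for $\chi^*$ primitive modulo $c'$), which interchanges $s \leftrightarrow 1-s$, inserts the Gamma ratio $G_\pm(s)$ according to the parity of $\chi^*$, and produces a factor of $\tau(\chi^*)^N (c')^{-Ns+N/2}$ together with $L(1-s, \widetilde{F} \otimes \overline{\chi^*})$. Using \eqref{eq:dualCoefficients} and inverting the Gauss sums back into additive characters on the dual side, the nested inner sums over residue classes should, for each divisor pattern $\vec d$, collapse precisely to the hyper-Kloosterman sum $\Kl(a, \pm n, c; \vec q, \vec d)$, which recovers the shape of the right-hand side of \eqref{eq:glnVoronoi}.

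The technical heart is that character inversion on a single conductor does not close cleanly when $n$ is not coprime to $c$ or to the $q_i$, since the contributions from different conductors $c'$ interlock. To manage this, I would follow the paper's central device and form a double Dirichlet series
\begin{equation*}
Z(s,w) \;=\; \sum_{n,c \geq 1} \frac{\alpha(n,c)}{n^{s} c^{w}},
\end{equation*}
whose $(n,c)$-coefficients on one side are the terms appearing in the left-hand side of \eqref{eq:glnVoronoi}, while applying the twisted functional equation term by term and reorganizing through Gauss sums yields the same $Z(s,w)$ as a double Dirichlet series whose $(n,c)$-coefficients are those of the right-hand side of \eqref{eq:glnVoronoi}. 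Uniqueness of Dirichlet series expansions then forces the desired $c$-by-$c$ equality. The main obstacle is exactly the combinatorial interplay, at non-coprime primes, between the Hecke relations and the sum over primitive characters of the divisors $c' \mid c$; the double Dirichlet series is the machine that makes the cross-conductor contributions glue coherently, which is the \emph{interlocking structure} highlighted in the introduction. Analytic continuation to $\Re(s) < 0$ then follows from the meromorphy of $G_\pm(s)$ together with absolute convergence of the hyper-Kloosterman side for $\Re(s)$ sufficiently negative.
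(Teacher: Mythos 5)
Your overall philosophy (convert the additive twist into character twists, apply the functional equations of the twisted $L$-functions, and recover the identity through a double Dirichlet series plus uniqueness of Dirichlet coefficients) matches the paper's outline, but two of your central steps fail as stated and the object that actually carries the proof is absent. First, Gauss-sum inversion of $e(\overline{a}n/c)$ combined with the Hecke relation \eqref{glnheckerelation1} does \emph{not} rewrite $L_{\vec{q}}(s,F,a/c)$ as a finite combination of the standard twisted $L$-functions $L(s,F\times\chi^*)$. What character averaging genuinely produces (this is Proposition \ref{equivalent} together with Lemma \ref{lem:averageGLnKloosterman}) is the series $H(\vec{q};c,\chi^*,s)$ of \eqref{eq:Hdefn}, whose coefficients still carry the indices $q_{N-2},\ldots,q_1$ and whose Gauss sums are those of \emph{imprimitive} characters; the relation \eqref{glnheckerelation1} expands a product $A(1,\ldots,1,m)A(q_{N-2},\ldots,q_1,n)$ into a sum, so it cannot ``decouple'' $n$ inside a single Dirichlet series. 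These $H$-functions have no functional equation among the hypotheses: their functional equation is exactly the content of Theorem \ref{thm:glnAverageVoronoi}, so your plan to apply ``the twisted functional equation term by term'' at each fixed conductor is circular --- it assumes the statement being proved whenever $(n,c)>1$ or the $q_i$ share factors with $c$.

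Second, your double Dirichlet series $Z(s,w)=\sum_{n,c}\alpha(n,c)n^{-s}c^{-w}$, with $\alpha(n,c)$ simply declared to be ``the terms of the left-hand side,'' has no mechanism for a second expansion, so it does no work. The paper's construction is concretely different: for a \emph{fixed} primitive $\chi^*$ it takes $Z(s,w)=L_{\vec{q}}(2w-s,F)\,L(s,F\times\chi^*)/L(2w-2s+1,\overline{\chi^*})$, applies the functional equation only to the standard twist $L(s,F\times\chi^*)$ (which \emph{is} a hypothesis), and uses the auxiliary factor $L_{\vec{q}}(2w-s,F)$ together with the denominator to generate, via the Ramanujan-type identity of Lemma \ref{divisorcharacter} and Lemma \ref{lem:averageGauss}, the imprimitive Gauss sums $g(\overline{\chi^*},\ell c^*,n)$ for all moduli $\ell c^*$ simultaneously; the two Hecke relations \eqref{glnheckerelation1} and \eqref{glnheckerelation2} are applied on the two sides of the functional equation to reattach the $q_i$. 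Even then, comparing $w$-coefficients yields only a convolved family of identities (Theorem \ref{gln-average-average-voronoi}), which must be de-convolved by M\"obius inversion (Proposition \ref{mobius}) before orthogonality over all characters mod $c$ (Proposition \ref{equivalent}) returns \eqref{eq:HequalsG} to the additive form \eqref{eq:glnVoronoi}. None of these ingredients --- the auxiliary untwisted factor, the division by $L(2w-2s+1,\overline{\chi^*})$, the dual-side Hecke relation, the M\"obius step --- appears in your proposal, so the ``interlocking structure'' you invoke remains a name rather than an argument. (A smaller point: the analytic continuation comes from the entirety of $L(s,F\times\chi^*)$, i.e.\ from the entire coefficients $a_n(s)$, not from convergence of the Kloosterman side together with meromorphy of $G_\pm$.)
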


The traditional Voronoi formula, involving weight functions instead of Dirichlet series, is obtained after taking an inverse Mellin transform against a suitable test function.

Choose a Dirichlet character $\chi$ modulo $c$, which is not necessarily primitive, 
multiply both sides of \eqref{eq:glnVoronoi} by $\chi(a)$, and sum this equality over the reduced residue system modulo $c$. 
We obtain the following the Voronoi formula with Gauss sums. 
In Section \ref{subsec:equivalent} we show through elementary finite arithmetic that the formulas \eqref{eq:glnVoronoi} and \eqref{eq:HequalsG} are equivalent.

\begin{theorem}[Voronoi formula with Gauss sums]\label{thm:glnAverageVoronoi}
Let $\chi$ be a Dirichlet character modulo $c$, induced from the primitive character
	$\chi^*$ modulo $c^*$ with $c^*|c$. 
Define for $\vec{q}=(q_1,\dots,q_{N-2})$ a tuple of positive integers
	\begin{equation}\label{eq:Hdefn}
		H(\vec{q}; c, \chi^*, s)= \sum_{n=1}^\infty 
		\frac{A(q_{N-2}, \ldots, q_{1}, n) g(\overline{\chi^*},c,n)}{n^s (c/c^*)^{1-2s}},
	\end{equation}
for $\Re(s)>1$ and
	\begin{align}
		G&(\vec{q}; c, \chi^*,s) =\frac{G(s) \chis(-1)}{c^{Ns-1}(c/c^*)^{1-2s}}
		\sum_{d_1c^*|q_1c}\sum_{d_2 c^*| \frac{q_1q_2c}{d_1}} 
		\cdots \sum_{d_{N-2}c^*| \frac{q_1\ldots q_{N-2} c}{d_1 \ldots d_{N-3}}}		\notag
\\
		&\times \sum_{n=1}^\infty 
		\frac{A(n,d_{N-2}, \ldots, d_1)}
		{n^{1-s}d_1 d_2 \cdots d_{N-2}}
		\frac{d_1^{(N-1)s}d_2^{(N-2)s}\cdots d_{N-2}^{2s}}
		{q_1^{(N-2)s}q_2^{(N-3)s}\cdots q_{N-2}^{s}}\label{eq:Gdefn}\\
		&\times g(\chi^*, c,d_1) g(\chi^*, \tfrac{q_1c}{d_1},d_2) 
		\cdots g(\chi^*, \tfrac{q_1\cdots q_{N-3}c}{d_1 \cdots d_{N-3}}, d_{N-2})
		g(\chi^*, \tfrac{q_1\cdots q_{N-2}c}{d_1\cdots d_{N-2}},n) \notag
	\end{align}
for $\Re(s)<0$,	
where $G$ equals $G_+$ or $G_-$ depending on whether $\chis(-1)$ is $1$ or $-1$, 
and $g(\chis,\ell c^*,*)$ is the Gauss sum of the induced character modulo $\ell c^*$ from $\chis$, which is defined in Definition \ref{definegausssum}.
Both functions have analytic continuation to all $s \in \C$, and the equality 
\begin{equation}\label{eq:HequalsG}
	H(\vec{q}; c,\chi^*,s) = G(\vec{q}; c,\chi^*,s)
\end{equation} is satisfied.
\end{theorem}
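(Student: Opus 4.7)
The plan is to deduce \eqref{eq:HequalsG} from \eqref{eq:glnVoronoi} by multiplying both sides by $\chi(a)$, averaging over the reduced residue system modulo $c$, and normalizing by $(c/c^*)^{2s-1}$; the reverse direction then follows by character orthogonality on $(\Z/c\Z)^*$. On the left-hand side of \eqref{eq:glnVoronoi}, substituting $b=\bar a$ inside the sum $\sum_{(a,c)=1}\chi(a)\,e(\bar a n/c)$ produces $g(\chibs,c,n)$, so after the normalization the left side is exactly $H(\vec q;c,\chi^*,s)$.

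For the right-hand side two steps are needed. First, the symmetry $\Kl(-a,-n,c;\vec q,\vec d)=\Kl(a,n,c;\vec q,\vec d)$, obtained by substituting $x_i\mapsto -x_i$ in the defining sum (the first and last linear terms flip signs, while the intermediate bilinear terms are preserved), gives
\[
\sum_{(a,c)=1}\chi(a)\Kl(a,-n,c;\vec q,\vec d)=\chis(-1)\sum_{(a,c)=1}\chi(a)\Kl(a,n,c;\vec q,\vec d).
\]
Substituting into the two halves of \eqref{eq:glnVoronoi} collapses them into a single term with coefficient $G_+$ when $\chis(-1)=1$ and $-G_-$ when $\chis(-1)=-1$, matching the prefactor $G(s)\chis(-1)$ in \eqref{eq:Gdefn}.

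Second, one factors $\sum_{(a,c)=1}\chi(a)\Kl(a,n,c;\vec q,\vec d)$ iteratively into Gauss sums. The variable $a$ appears only through $e(d_1x_1a/c)$, so executing the $a$-sum first yields $g(\chi^*,c,d_1x_1)$. This induced-character Gauss sum is nonvanishing only under a divisibility relating $d_1$, $c^*$, and $q_1c$, which supplies the outermost constraint $d_1c^*\mid q_1c$ in \eqref{eq:Gdefn}. Iterating the same procedure on $x_1,\ldots,x_{N-2}$ telescopes out successive factors $g(\chi^*,q_1\cdots q_{i-1}c/(d_1\cdots d_{i-1}),d_i)$, and the final exponential in $\bar x_{N-2}$ produces $g(\chi^*,q_1\cdots q_{N-2}c/(d_1\cdots d_{N-2}),n)$. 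Matching moduli against the surviving monomials in $d_i$ and $q_j$ recovers the exponent structure $d_1^{(N-1)s}\cdots d_{N-2}^{2s}/(q_1^{(N-2)s}\cdots q_{N-2}^{s})$ of \eqref{eq:Gdefn}. For the reverse implication, I would multiply \eqref{eq:HequalsG} by $\bar\chi(a)$ and sum over all $\chi\bmod c$; character orthogonality isolates the coefficient of $e(\bar a n/c)$ on the left, and the inverse of the above Gauss-sum factorization reassembles the hyper-Kloosterman sum on the right, reproducing \eqref{eq:glnVoronoi}.

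The chief obstacle is the iterative Gauss-sum factorization. Although conceptually a telescoping of the variables $x_i$ against the successive exponentials, each $x_i$ is taken modulo $q_1\cdots q_i c/(d_1\cdots d_i)$ rather than modulo $c$, so the Gauss sums produced are of induced characters to inflated moduli; their nonvanishing conditions must cut out exactly the divisibility set in \eqref{eq:Gdefn}. Tracking the surviving powers of $c/c^*$, $d_i$, and $q_i$ requires careful bookkeeping, and is where the combinatorial content of the equivalence — and of the backwards direction — resides.
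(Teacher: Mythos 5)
Your argument is correct, but it is not the paper's proof of this theorem; rather, it reproduces the paper's \emph{equivalence} machinery and feeds Miller--Schmid's Theorem \ref{thm:glnVoronoi} into it. Averaging \eqref{eq:glnVoronoi} against $\chi(a)$, collapsing the $\pm$ terms by the parity of $\chis(-1)$, and factoring the averaged hyper-Kloosterman sum iteratively into Gauss sums (with the nonvanishing criteria supplying the constraints $d_ic^*\mid \cdots$) is exactly Lemma \ref{lem:averageGLnKloosterman} and Proposition \ref{equivalent}, including the orthogonality argument for the converse direction; so your sketch is sound, and the ``chief obstacle'' you identify is precisely what that lemma carries out (the gcd identities $(M_i,d_{i+1}x_{i+1})=(M_i,d_{i+1})$ used to pull out $\overline{\chis(x_i)}$, the fact that $c^*$ divides the modulus of $x_i$ so $\chis(\overline{x_i})=\overline{\chis(x_i)}$, and the passage from \eqref{eq:divisibilityConditions} to \eqref{divisibilitycondition2} via Lemma \ref{inducedgauss2}). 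The paper's own proof of \eqref{eq:HequalsG}, however, runs in the opposite logical direction and never invokes \eqref{eq:glnVoronoi}: it constructs the double Dirichlet series $Z(s,w)$ of \eqref{eq:zsw}, expands it in two ways (directly, and after applying the twisted functional equation \eqref{gln-FE-character} to $L(s,F\times\chis)$), uses the Hecke relations \eqref{glnheckerelation1}--\eqref{glnheckerelation2} with Lemmas \ref{divisorcharacter} and \ref{lem:averageGauss}, compares Dirichlet coefficients in $w$ (Theorem \ref{gln-average-average-voronoi}), and removes the resulting convolution by M\"obius inversion (Proposition \ref{mobius}); Proposition \ref{equivalent} is then used to \emph{deduce} \eqref{eq:glnVoronoi} from \eqref{eq:HequalsG}, not the reverse. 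The trade-off: your route is shorter for Hecke--Maass cusp forms but is conditional on the automorphy-based input \eqref{eq:glnVoronoi}, whereas the paper's route proves $H=G$ from the twisted functional equations and Hecke relations alone, which is what makes Theorem \ref{conversethm} and the applications to Rankin--Selberg convolutions and isobaric sums (where no analogue of Theorem \ref{thm:glnVoronoi} is available) possible.

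Two small corrections to your sketch. The monomials $d_1^{(N-1)s}\cdots d_{N-2}^{2s}/(q_1^{(N-2)s}\cdots q_{N-2}^{s})$ are not ``recovered'' from the Gauss-sum factorization; they are already present in \eqref{eq:glnVoronoi} and pass through the $a$-average untouched, the only extra bookkeeping being the common factor $(c/c^*)^{1-2s}$ on both sides. And the analytic continuation assertion of the theorem should be stated as following from the finiteness of the $a$-average (and of the character sum in the converse direction), as in the last sentence of the proof of Proposition \ref{equivalent}.
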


In proving \eqref{eq:HequalsG}, we define
\begin{equation}\label{eq:zsw}
	Z(s,w) = \frac{L_{\vec{q}}(2w-s,F)L(s,F\times \chi^*)}{L(2w-2s+1,\overline{\chi^*})},
\end{equation}
where $\vec{q} = (q_1, \ldots, q_{N-2})$ is a tuple of positive integers, and the function $L_{\vec{q}} (s,F)$ is given as the Dirichlet series
\[
	L_{\vec{q}}(s,F)= \sum_{n=1}^\infty \frac{A(q_{N-2}, \ldots, q_1, n) }{n^s},
\]
for $\Re(s)\gg 1$.  We express $Z(s,w)$ as a double Dirichlet series in two different ways. In one region of convergence we express the $L$-functions as Dirichlet series and obtain $$Z(s,w) = \sum_{n=1}^\infty \frac{a_n(s)}{n^{2w}}.$$
On the other hand we apply the functional equation of $L(s,F\times \chi^*)$, replacing $s$ with $1-s$, and 
write $Z(s,w)$ as the Dirichlet series $$Z(s,w) = \sum_{n} \frac{b_n(s)}{n^{2w}}.$$ By the uniqueness of Dirichlet series, we must have $a_n(s)=b_n(s)$. This equality leads us to the Voronoi formula with Gauss sums.


Our proof only uses the Hecke relations about the Fourier coefficients of $F$ and the exact form of the functional equations. The expression of Gamma factors, or the automorphy of $F$, plays no role. Hence we can formulate our theorem in a style similar to the classical converse theorem of Weil. First let us list the properties of Fourier coefficients that we use in order to state the following theorem.


The Fourier coefficients of $F$ grow moderately, i.e.,
\begin{equation}\label{eq:moderateGrowth}
 A(m_1,\dots,m_{N-1}) \ll \bra{m_1\dots m_{N-1}}^{\sigma}
\end{equation}
for some $\sigma >0$. Given a primitive Dirichlet character $\chis$ modulo $c^*$,
define the twisted $L$-function 
\begin{equation}\label{eq:twistedLfunction}
	L(s, F\times \chis)=\sum_{n=1}^\infty \frac{A(1,\dots,1,n)\chis(n)}{n^s},
\end{equation}
for $\Re(s)>\sigma+1$. It has analytic continuation to the whole complex plane, and satisfies the functional equation
\begin{equation}\label{gln-FE-character}
L(s, F\times \chis)= \tau(\chis)^N {c^*}^{-Ns}G(s) L(1-s, \widetilde{F}\times \chibs),
\end{equation}
where $G(s)=G_+(s)$ or $G_-(s)$ depending on whether $\chis(-1)=1$ or $-1$.

%


\begin{theorem}\label{conversethm}
Let $F$ be a symbol and assume that with $F$ comes numbers $A(m_1,\ldots, m_{N-1})\in \mathbb{C}$ attached to every $(N-1)$-tuple $(m_1,\ldots,m_{N-1})$ of natural numbers. Assume $A(1,\ldots,1)=1$.

Assume that these ``coefficients'' $A(*,\ldots,*)$ satisfy the aforementioned 
Hecke relations \eqref{glnheckerelation0}, \eqref{glnheckerelation1} and \eqref{glnheckerelation2}.
Further assume that they grow moderately as in \eqref{eq:moderateGrowth}.

Let $\widetilde{F}$ be another symbol whose associated coefficients 
$B(*,\ldots,*)\in \mathbb{C}$
are given as in \eqref{eq:dualCoefficients}
and assume that they also satisfy the same properties. 
Further, assume that there are two meromorphic functions  $G_{+}(s)$ and $G_{-}(s)$ associated to the pair $(F,\widetilde F)$, so that for a given primitive character $\chis$, the function $L(s,F\times\chis)$ as defined in \eqref{eq:twistedLfunction} satisfies the functional equation \eqref{gln-FE-character}.
 
Under all these assumptions, $L_\vec{q}(s,F, a/c)$ defined as in \eqref{eq:LqTwistedAdditively} for $\Re(s)>1+\sigma$,  has analytic continuation to all $s \in \C$, and satisfies the Voronoi formula \eqref{eq:glnVoronoi}. (The Dirichlet series on the right side of \eqref{eq:glnVoronoi} is absolutely convergent for $\Re(s)<-\sigma$.)

Equivalently the functions $H(\vec{q}; c,\chis, s)$ and $G(\vec{q};c,\chis,s)$ as defined by the formulas \eqref{eq:Hdefn} and \eqref{eq:Gdefn} have analytic continuations to all $s$ and equal each other as in \eqref{eq:HequalsG}.
\end{theorem}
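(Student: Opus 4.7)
The plan is to follow the double-Dirichlet-series strategy sketched in the introduction. Since the equivalence of the additive form \eqref{eq:glnVoronoi} and the Gauss-sum form \eqref{eq:HequalsG} is established in Section \ref{subsec:equivalent} by finite Fourier analysis modulo $c$ (using only character orthogonality on $(\Z/c\Z)^\times$, and none of the assumed functional equations), I would reduce the problem to proving $H(\vec{q};c,\chi^*,s) = G(\vec{q};c,\chi^*,s)$ for every primitive $\chi^*$ with modulus $c^*$ dividing $c$. The three hypotheses of Theorem \ref{conversethm} are tailored to this: moderate growth \eqref{eq:moderateGrowth} supplies absolute convergence in a half-plane, the Hecke relations \eqref{glnheckerelation1} and \eqref{glnheckerelation2} allow products of Fourier coefficients from different $L$-series to be combined into single divisor sums, and the twisted functional equation \eqref{gln-FE-character} is the only ingredient that distinguishes the two expansions of $Z(s,w)$.

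The central object is $Z(s,w)$ from \eqref{eq:zsw}. On the open region where $\Re(2w-s), \Re(s) > 1+\sigma$ and $\Re(2w-2s+1) > 1$, all three factors are absolutely convergent Dirichlet series (the denominator through its Möbius expansion, via its Euler product), so $Z(s,w)$ is holomorphic. For the first expansion I would open all three series and, after reindexing, collect by the variable that carries the full $w$-dependence, using one application of Hecke relation \eqref{glnheckerelation1} to fuse the coefficients of $L_{\vec{q}}(2w-s,F)$ and $L(s,F\times\chi^*)$. The Möbius factor from $L(2w-2s+1,\overline{\chi^*})^{-1}$ combines with the character factor $\chi^*$ from $L(s,F\times\chi^*)$ into the standard Dirichlet-convolution representation of the imprimitive Gauss sum $g(\overline{\chi^*}, c, n)$. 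Reading off the coefficient of $n^{-2w}$ then yields $Z(s,w) = \sum_n a_n(s) n^{-2w}$ with $a_n(s)$ matching the $n\tth$ summand of $H(\vec{q};c,\chi^*,s)$, including the normalising power $(c/c^*)^{2s-1}$ in \eqref{eq:Hdefn}.

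For the second expansion I would apply the functional equation \eqref{gln-FE-character} to $L(s,F\times\chi^*)$ alone, replacing it by $\tau(\chi^*)^N {c^*}^{-Ns} G(s) L(1-s,\widetilde F\times\overline{\chi^*})$, where $G=G_{\pm}$ according to the sign of $\chi^*(-1)$. Opening the new Dirichlet series, translating coefficients via \eqref{eq:dualCoefficients}, and fusing them with the coefficients of $L_{\vec{q}}(2w-s,F)$ via Hecke relation \eqref{glnheckerelation2}, I would regroup the resulting multiple sum as a nested divisor sum in $d_1, d_2, \ldots, d_{N-2}$; the divisibility chain \eqref{eq:divisibilityConditions} should emerge automatically from the support of \eqref{glnheckerelation2} combined with the coprimality built into the Möbius expansion of $L(2w-2s+1,\overline{\chi^*})^{-1}$. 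The factor $\tau(\chi^*)^N$ then distributes across the tower of moduli $c,\ q_1c/d_1,\ \ldots,\ q_1\cdots q_{N-2}c/(d_1\cdots d_{N-2})$ through the standard factorisation of Gauss sums of induced characters, producing exactly the product of Gauss sums in \eqref{eq:Gdefn}. Extracting the coefficient of $n^{-2w}$ gives $b_n(s)$ matching the $n\tth$ summand of $G(\vec{q};c,\chi^*,s)$.

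Given both expansions, uniqueness of Dirichlet coefficients in the single variable $w$ forces $a_n(s) = b_n(s)$ for every $n$ as meromorphic functions of $s$; summing in $n$ yields \eqref{eq:HequalsG} on the overlap of the two domains of absolute convergence, and since those domains are complementary half-planes in $s$, each side furnishes the analytic continuation of the other across the boundary. The additive Voronoi formula \eqref{eq:glnVoronoi} then follows from the Section \ref{subsec:equivalent} equivalence, and the stated region of convergence on its right-hand side ($\Re(s)<-\sigma$) is the one in which the second expansion converges absolutely. The main obstacle I anticipate is the combinatorial bookkeeping in the second expansion: one must verify that the nested divisor sum from \eqref{glnheckerelation2}, combined with the factorisation of $\tau(\chi^*)^N$ across the conductor tower and with all the ratios $d_1^{(N-1)s} \cdots d_{N-2}^{2s} / (q_1^{(N-2)s} \cdots q_{N-2}^s)$ appearing in \eqref{eq:Gdefn}, reproduces the right-hand side of \eqref{eq:Gdefn} exactly. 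The first expansion is less delicate, since it only demands the standard convolution formula for $g(\overline{\chi^*},c,n)$ together with a single use of \eqref{glnheckerelation1}.
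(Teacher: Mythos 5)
Your overall plan (reduce to \eqref{eq:HequalsG} via the finite-arithmetic equivalence of Section \ref{subsec:equivalent}, then expand $Z(s,w)$ from \eqref{eq:zsw} in two ways and equate Dirichlet coefficients in $w$) is indeed the paper's strategy, but the central coefficient identification is misstated, and what is missing is exactly the main difficulty of the $\GL(N)$, $N\geq 3$, case. When you open $L_{\vec{q}}(2w-s,F)L(s,F\times\chi^*)$ and fuse coefficients with the Hecke relation \eqref{glnheckerelation1}, the relation introduces divisor sums $d_1\mid q_1,\dots,d_{N-2}\mid q_{N-2}$ and replaces the indices $q_i$ by $q_id_{i-1}/d_i$; moreover the Gauss sums produced by Lemma \ref{divisorcharacter} have moduli $\ell c^*$ with $\ell$ summed and tied to the $w$-variable (note that $Z(s,w)$ does not depend on $c$ at all, only on $c^*$ and $\vec{q}$, so a fixed-modulus $g(\overline{\chi^*},c,n)$ cannot simply ``emerge''). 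Consequently the coefficient of $n^{-2w}$ is not the $n\tth$ summand of $H(\vec{q};c,\chi^*,s)$: up to the constant $\tau(\overline{\chi^*})$ it is the convolved quantity $\hhh(\vec{q};n,s)=\sum_{d_i\mid q_i}\chi^*(d_1\cdots d_{N-2})(d_1\cdots d_{N-2})^{-s}\sum_{d\ell=n}\chi^*(d)\,H(\vec{q}';\ell c^*,\chi^*,s)$ with $\vec{q}'=(q_1d/d_1,\dots,q_{N-2}d_{N-3}/d_{N-2})$, and similarly on the functional-equation side. Uniqueness of Dirichlet series in $w$ therefore yields only the interlocked identity $\hhh=\ggg$ of Theorem \ref{gln-average-average-voronoi}, which couples the Voronoi data of infinitely many conductors $\ell c^*$ and shifted tuples $\vec{q}'$; ``summing in $n$'' does not produce \eqref{eq:HequalsG}. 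One must still invert this convolution, which the paper does in Proposition \ref{mobius} by a multi-dimensional M\"obius sum over $e_0\mid n$, $e_1\mid q_1e_0,\dots$, collapsing the coupling to give $H(\vec{q};nc^*,\chi^*,s)=G(\vec{q};nc^*,\chi^*,s)$ for every modulus. Without this unscrambling step your argument proves only an averaged statement, and this is precisely the combinatorial obstacle the whole construction is designed to overcome.

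A secondary issue: you assert \eqref{eq:HequalsG} ``on the overlap of the two domains of absolute convergence'' and that each side continues the other ``across the boundary.'' The two domains ($\Re(s)\gg 1$ for the $H$-side, $\Re(1-s)\gg 1$ for the $G$-side) are disjoint half-planes, so there is no overlap, and two analytic functions on disjoint regions do not automatically continue one another. The continuation mechanism in the paper is that each $w$-coefficient $a_n(s)$ is visibly entire in $s$ (a finite divisor sum times $L(s,F\times\chi^*)$, entire by hypothesis) and agrees with the two convolved expressions in their respective half-planes; it is this common entire function that glues the two sides, and your write-up needs that (or an equivalent) argument made explicit.
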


\begin{remark}[The structure of this article]
Theorem \ref{conversethm} is our main result. 
For the most part our focus is $N\geq 3$, and we deal with the case $N=2$ in Remark \ref{remark-GL2}.
The Voronoi formula \eqref{eq:glnVoronoi} is proved to be equivalent to a formula \eqref{eq:HequalsG} involving Gauss sums.
The equivalence is shown in Proposition \ref{equivalent}. 
A convoluted version of \eqref{eq:HequalsG} is obtained in Theorem \ref{gln-average-average-voronoi} by comparing Dirichlet coefficients
of two different expressions of a double Dirichlet series. We later show in Proposition \ref{mobius} that this convoluted version yields \eqref{eq:HequalsG}.
\end{remark}

\begin{remark}\label{eulerian}
If we start with an $L$-series $L(s,F)$ with an Euler product 
$$L(s,F)=\sum_{n=1}^\infty \frac{A(1,\dots,1,n)}{n^s}=\prod_p \prod_{i=1}^N\bra{1-\frac{\alpha_i(p)}{p^s}}\inv$$
and with $\prod_i \alpha_i(p)=1$ for any $p$,
we can define $A(p^{k_1},\dots,p^{k_{N-1}})$ by the Casselman-Shalika formula (Proposition 5.1 of \cite{zhou2}) and they are compatible with the Hecke relations.
More explicitly, for a prime number $p$, 
we define $A(p^{k_1},\dots,p^{k_{N-1}}) = S_{k_1,\dots,k_{N-1}}(\alpha_1(p),\dots, \alpha_N(p))$ by the work of Shintani
where $S_{k_1,\dots,k_{N-1}}(x_1,\dots,x_N)$ is the Schur polynomial, which can be found 
in \cite[p. 233]{goldfeld}.

We extend the definition to all $A(*,\dots,*)$ multiplicatively by \eqref{glnheckerelation0}.
One can prove that $A(*,\ldots,*)$ satisfies the Hecke relations \eqref{glnheckerelation0}, \eqref{glnheckerelation1}, \eqref{glnheckerelation2}.
In summary, the ``coefficients'' $A(*,\ldots,*)$ along with the Hecke relations can be generated by an $L$-function with an Euler product.
\end{remark}




The following examples satisfy the conditions in Theorem \ref{conversethm} and hence we have a Voronoi formula for each of them.
	
\begin{example}[{Automorphic form for $\ssl{N}{Z}$}]  	
Any cuspidal automorphic form for $\ssl{N}{Z}$
satisfies the conditions in Theorem \ref{conversethm}.
It can have an unramified or ramified component at the archimedean place, because only the exact form of the $G_\pm$ function would change (see \cite{godementjacquet}).
The Hecke-Maass cusp forms considered in Theorem \ref{thm:glnVoronoi} are included in this category, and therefore, we   prove Theorem \ref{conversethm} instead of Theorem \ref{thm:glnVoronoi}. 		
\end{example}


\begin{example}[{Rankin-Selberg convolution}] \label{ex:rankinSelberg}
Let $F_1$ and $F_2$ be even Hecke-Maass cusp forms for $\ssl{N_1}{Z}$ and $\ssl{N_2}{Z}$ 
with the spectral parameters $(\lambda_1,\dots,\lambda_{N_1})\in \mathbb{C}^{N_1}$ and $(\mu_1,\dots,\mu_{N_2})\in \mathbb{C}^{N_2}$ respectively.
Assume $F_1\neq \widetilde F_2$ if $N_1=N_2$.
The automorphic forms $F_1$ and $F_2$ have the standard $L$-functions
$$L(s,F_1)=\prod_p \prod_{i=1}^{N_1}\bra{1-\frac{\alpha_i(p)}{p^s}}\inv 
\qquad \text{ and } \qquad 
L(s,F_2) = \prod_p \prod_{i=1}^{N_2}\bra{1-\frac{\beta_i(p)}{p^s}}\inv.$$

Let $L(s,F_1\times F_2)$ be the Rankin-Selberg $L$-function of $F_1$ and $F_2$ defined by $$L(s, F_1\times F_2)=\prod_p \prod_{i_1=1}^{N_1}\prod_{i_2=1}^{N_2}\bra{1-\frac{\alpha_{i_1}(p)\beta_{i_2}(p)}{p^s}}\inv .$$
The $L$-function is of degree $N:=N_1N_2$.
The work of Jacquet, Piatetski-Shapiro, and Shalika \cite{JPSS} shows that 
$L(s,F\times \chis)=L(s, (F_1\times \chis)\times F_2)$ is holomorphic and satisfies the functional equation \eqref{gln-FE-character} for $F:=F_1\times F_2$.


Define $A(p^{k_1}, \dots, p^{k_{N-1}})$
by the Schur polynomials as  in Remark \ref{eulerian} 
$$A(p^{k_1}, \dots, p^{k_{N-1}}):=S_{k_1,\dots, k_{N-1}}  \bra{\alpha_1(p)\beta_1(p),\dots, \alpha_{i_1}(p)\beta_{i_2}(p),\dots, \alpha_{N_1}(p)\beta_{N_2}(p)}.$$
Extend the definition to all $A( *,\dots*)$ multiplicatively by \eqref{glnheckerelation0}. 
Define 	$$G_\pm(s):= {i^{-N\delta}}\pi^{-N(1/2-s)}\prod_{i_1=1}^{N_1}\prod_{i_2=1}^{N_2} \Gamma\bra{\frac{\delta + 1-s-\overline{\lambda_{i_1}}-\overline{\mu_{i_2}}}{2}}\Gamma\bra{\frac{\delta + s-\lambda_{i_1}-\mu_{i_2}}{2}}^{-1},$$
where one takes $\delta = 0$ or $\delta = 1$ for $G_+$and $G_-$ respectively.
 Theorem \ref{conversethm} gives us a Voronoi formula for the Rankin-Selberg convolution $F=F_1\times F_2$ with the $A( *,\dots* )$ and $G_\pm$ defined above.
\end{example}

\begin{example}[{Isobaric sum, Eisenstein series}]\label{ex:isobaric}
For $i=1,\dots,k$ let $F_i$ be a Hecke-Maass cusp form for $\ssl{N_i}{Z}$. 
Let $s_i$ be complex numbers with $\sum_i N_i s_i=0$.
Define the isobaric sum $F=\bra{F_1\times  |\cdot|^{s_1}_\mathbb{A} } \boxplus\bra{ F_2 \times  |\cdot|^{s_2}_\mathbb{A}}\boxplus \cdots \boxplus\bra{ F_k\times  |\cdot|^{s_k}_\mathbb{A}}$, whose $L$-function is
$L(s,F)=\prod_i L(s+s_i  ,F_i)$.  
This isobaric sum $F$ is associated with a non-cuspidal automorphic form on GL($N$), an Eisenstein series twisted by Maass forms,  where $N=\sum_i N_i$ (see \cite[Section 10.5]{goldfeld}). The $L$-function twisted by a character is simply given by $L(s,F\times \chi^*) = \prod_{i}L(s + s_i, F_i\times\chi^*)$ which satisfies the conditions of Theorem \ref{conversethm}.
\end{example}

\begin{example}[{Symmetric powers on GL(2)}]
Let $f$ be a modular form of weight $k$ for $\ssl{2}{Z}$ and define $F:=\textrm{Sym}^2 f$. The symmetric square $F$ satisfies the conditions in Theorem \ref{conversethm} by the work of Shimura, \cite{shimura}. Here we do not need to involve automorphy using Gelbart-Jacquet lifting. 
One may have similar results for higher symmetric powers depending on the recent progress in the theory of Galois representations.
\end{example}


As a last remark, let us explain the construction of the double Dirichlet series $Z(s,w)$ given by \eqref{eq:zsw}. This construction 
originates from the Rankin-Selberg convolution of a cusp form $F$ and an Eisenstein series on $\GL(2)$. 
The Fourier coefficients of the Eisenstein series $E(z,s,\chis)$ can be written in terms of the divisor function $\sigma_{2s-1}(n,\chis)$ defined in  Definition \ref{definegausssum}:
\[
	\frac{1}{n^{2s-1}}\frac{\sigma_{2s - 1}(n,\chi^*)}{L(2s,\overline{\chi^*})} 
	\quad \text{ or } \quad \sum_{\ell=1}^\infty \frac{g(\overline{\chi^*}, \ell c^*, n)}{(\ell c^*)^{2s}}.
\]
Therefore, in the case of $F$ on $\GL(2)$, the Rankin-Selberg integral of $F$ and $E(*,w-s+1/2,\chis) $ produces the double Dirichlet series
$$ \sum_{n=1}^\infty \sum_{\ell=1}^\infty \frac{A(n) g(\overline{\chi^*}, \ell c^*,n)}{n^s (\ell c^*)^{2w + 1-2s}}.$$
A similar expression appears on the left hand side of the Voronoi formula with Gauss sums \eqref{eq:Hdefn}.
The Rankin-Selberg convolution of the cusp form $F$ and an Eisenstein series can be written as a product of two copies of standard $L$-function of $F$, namely
$$ 
	\frac{L(2w-s,F)L(s,F\times\chi^*)}{L(2w-2s + 1,\overline{\chi^*})}.$$
Applying the functional equation to only $L(s,F\times \chis)$ gives us another expression, which is similar to the right hand side \eqref{eq:Gdefn} of the Voronoi formula with Gauss sums.
Since $L(2w-s, F)$ was not used in this process, we have the freedom to replace $L(2w-s,F)$ by $L_\vec{q}(2w-s,F)$ in the case of $\GL(N)$ and it gives us enough generality to prove the Voronoi formula \eqref{eq:HequalsG} with Gauss sums.
In the case of $\GL(3)$, this construction is similar to  Bump's double Dirichlet series, see \cite[Chapter 6.6]{goldfeld} or \cite[Chapter X]{bump}.

\section{Background on Gauss sums}\label{sec:background}

Here we collect information about the Gauss sums of Dirichlet characters which are not necessarily primitive.

\begin{definition}\label{definegausssum}
Let $\chi$ be a Dirichlet character modulo $c$ induced from a primitive Dirichlet character $\chis$ modulo $c^*$. 
Define the divisor function $$\sigma_s(m,\chi)=\sum_{d|m}\chi(d)d^{s}.$$
Define the Gauss sum of $\chi$
$$g(\chis, c, m)=\sum_{\underset{\scriptstyle (u, c)=1}{u\mod c}}\chi(u)e\bra{\frac{mu}{c}},$$
and the standard Gauss sum for $\chis$ is given as $\tau(\chis)=g(\chis,c^*,1)$.

The Gauss sum $g(\chis,c,m)$ is the same as the Gauss sum $\tau_m(\chi)$ in other literature. However we prefer our notation because we come upon numerous Gauss sums of characters $\chi$ induced from a single primitive character $\chis$.

\end{definition}

\begin{lemma}[Gauss sum of non-primitive characters, Lemma 3.1.3.(2) of \cite{miyake}]\label{inducedgauss}
Let $\chi$ be a character modulo $c$ induced from primitive character $\chi^*$ modulo $c^*$.
Then the Gauss sum of $\chi$ is given by
$$g(\chis, c, a)
=\tau(\chi^*)\sum_{d|\bra{a,\frac{c}{c^*}}}d\chi^*\bra{\frac{c}{c^*d}}\overline{\chi^*}\bra{\frac{a}{d}}\mu\bra{\frac{c}{c^*d}}.$$
\end{lemma}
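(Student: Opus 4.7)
The plan is to reduce $g(\chis,c,a)$ to the standard Gauss sum of the primitive character $\chis$ via M\"obius inversion. Since $\chi$ is induced from $\chis$ we have $\chi(u)=\chis(u)$ when $(u,c)=1$ and zero otherwise; setting $m:=c/c^*$, the coprimality condition $(u,c)=1$ is equivalent to $(u,m)=1$ together with $(u,c^*)=1$, and the latter is automatic because $\chis(u)$ already vanishes when $u$ shares a factor with $c^*$. Detecting $(u,m)=1$ through $\sum_{d\mid (u,m)}\mu(d)$ and exchanging the order of summation rewrites $g(\chis,c,a)$ as $\sum_{d\mid m}\mu(d)\sum_{u\,\mathrm{mod}\,c,\,d\mid u}\chis(u)e(au/c)$.

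Next I would substitute $u=dv$ and pull out $\chis(d)$ (which is automatically zero unless $(d,c^*)=1$), so the inner sum becomes $\chis(d)\sum_{v\,\mathrm{mod}\,c/d}\chis(v)e(adv/c)$. Because $d\mid m$ we have $c^*\mid c/d$, so I would partition $v$ by its residue $v_0$ modulo $c^*$, writing $v=v_0+c^*k$ with $k=0,\ldots,m/d-1$. The exponential factors as $e(adv_0/c)\,e(ak/(m/d))$, and the geometric sum over $k$ vanishes unless $(m/d)\mid a$, in which case it contributes a factor $m/d$. In the surviving terms, the remaining sum over $v_0\bmod c^*$ is the standard Gauss sum $g(\chis,c^*,a')$ with $a':=ad/m$, which equals $\chibs(a')\tau(\chis)$ for every $a'$ since $\chis$ is primitive.

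Collecting the factors and performing the change of variable $d\mapsto m/d$ merges the two divisibility constraints $d\mid m$ and $(m/d)\mid a$ into the single condition $d\mid(a,c/c^*)$, yielding exactly the formula stated in the lemma. I expect the main obstacle to be purely bookkeeping: tracking when $\chis$ vanishes at arguments not coprime to $c^*$, verifying the periodicity identity $c^*\mid c/d$ that underlies the splitting $v=v_0+c^*k$, and executing the final change of variable without misplacing any of $\mu$, $\chis$, or a factor of $d$. The one input I would cite rather than reprove is the identity $g(\chis,c^*,a')=\chibs(a')\tau(\chis)$ valid for arbitrary $a'$ when $\chis$ is primitive, which handles the case $(a',c^*)>1$ by making both sides vanish.
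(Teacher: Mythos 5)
Your proof is correct. Note that the paper itself gives no argument for this lemma, citing it directly from Miyake (Lemma 3.1.3.(2)), so there is no in-paper proof to compare against; your derivation --- detecting the extra coprimality condition $(u,c/c^*)=1$ by M\"obius inversion, splitting the residues modulo $c/d$ along $c^*$, and invoking the separability $g(\chis,c^*,a')=\chibs(a')\tau(\chis)$ valid for all $a'$ when $\chis$ is primitive --- is the standard route and all the bookkeeping (the factor $m/d$, the constraint $(m/d)\mid a$, and the substitution $d\mapsto m/d$ merging the divisibility conditions into $d\mid(a,c/c^*)$) checks out.
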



\begin{lemma}[Theorem 9.12 of \cite{Montgomery}]\label{inducedgauss2}
Let $\chi^*$ be a primitive character modulo $c^*$ and assume $c^*|c$. Then, we have
\[
	g(\chi^*,c,a)= \tau(\chi^*)\frac{\phi(c)}{\phi\bra{\frac{c}{(c,a)}}}\mu\bra{\frac{c}{c^*(c,a)}}\chis\bra{\frac{c}{c^*(c,a)}}\chibs\bra{\frac{a}{(c,a)}},
\]
if $c^*|c/(a,c)$. Otherwise $g(\chi^*,c,a)$ is zero.
\end{lemma}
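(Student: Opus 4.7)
The plan is to deduce Lemma \ref{inducedgauss2} from Lemma \ref{inducedgauss}. Set $q := c/c^*$, $g := (c,a)$, and $e := (a,q)$, so Lemma \ref{inducedgauss} reads
\[
g(\chi^*, c, a) = \tau(\chi^*) \sum_{d \mid e} d \, \chi^*(q/d)\, \overline{\chi^*}(a/d)\, \mu(q/d).
\]
First I would analyze when this sum vanishes. For any surviving term $(a/d, c^*) = 1$, so at every prime $p \mid c^*$ one has $v_p(d) = v_p(a)$; combined with $d \mid q$ this forces $v_p(a) \leq v_p(q)$, and since $v_p(c) > v_p(q)$ at such primes, $v_p(g) = v_p(a) \leq v_p(q)$. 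For $p \nmid c^*$ the bound $v_p(g) \leq v_p(q)$ is automatic. Hence the sum vanishes whenever $g \nmid q$, equivalently $c^* \nmid c/g$, matching the vanishing assertion of the lemma.

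Now assume $c^* \mid c/g$, so $g \mid q$. Combined with the automatic $e \mid g$, this gives $e = g$, and the sum runs over $d \mid g$. Substituting $r = q/d = (q/g)(g/d)$ with $d' := g/d$, the squarefree condition on $r$ forces $q/g$ squarefree, $d'$ squarefree, and $(d', q/g) = 1$. Multiplicativity of $\chi^*$ on integers coprime to $c^*$ splits $\chi^*(r) = \chi^*(q/g)\chi^*(d')$ and $\overline{\chi^*}(ar/q) = \overline{\chi^*}(a/g)\overline{\chi^*}(d')$; the product $\chi^*(d')\overline{\chi^*}(d') = 1$ whenever $(d', c^*) = 1$, which is implicit in nonvanishing. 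Factoring out the $d'$-independent terms, the sum becomes
\[
\mu(q/g)\, \chi^*(q/g)\, \overline{\chi^*}(a/g) \sum_{\substack{d' \mid g,\ d' \text{ squarefree}\\ (d', c/g) = 1}} \frac{g}{d'} \, \mu(d').
\]

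The remaining sum over $d'$ is multiplicative in $g$: primes $p \mid g$ with $p \mid c/g$ contribute $p^{v_p(g)}$, while primes $p \mid g$ with $p \nmid c/g$ contribute $p^{v_p(g)-1}(p - 1)$. A parallel prime-by-prime expansion of $\phi(c)/\phi(c/g)$ gives the same factorization, so the two coincide. Reassembling the pieces and using $q/g = c/(c^* g)$ recovers the formula of Lemma \ref{inducedgauss2}. Edge cases $(q/g, c^*) > 1$ or $(a/g, c^*) > 1$ are handled uniformly, since both sides of the identity then vanish via the explicit character factors.

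The main obstacle is the prime-by-prime bookkeeping, particularly at primes that divide both $c^*$ and $q$, where one cannot cleanly separate the Dirichlet character structure from the arithmetic of $q$. Since the identity is Theorem 9.12 of \cite{Montgomery}, one may alternatively cite that reference directly.
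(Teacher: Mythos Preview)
The paper does not prove this lemma at all; it simply cites Theorem 9.12 of \cite{Montgomery}. Your approach---deriving the closed form from Lemma \ref{inducedgauss} by the substitution $d'=g/d$ and then evaluating the resulting multiplicative sum as $\phi(c)/\phi(c/g)$---is correct and gives a self-contained argument inside the paper's framework. The vanishing when $c^*\nmid c/(a,c)$, the identification $e=(a,q)=g$ once $g\mid q$, and the prime-by-prime match with $\phi(c)/\phi(c/g)$ all check out. One minor point: the edge case $(a/g,c^*)>1$ cannot actually occur, since $a/g$ and $c/g$ are coprime and $c^*\mid c/g$; so that clause is superfluous (though harmless).

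Your stated ``main obstacle'' about primes dividing both $c^*$ and $q$ is not a real difficulty in your argument. You never need to separate the character structure from the arithmetic of $q$: complete multiplicativity gives $\chi^*((q/g)d')=\chi^*(q/g)\chi^*(d')$ unconditionally, and the two constraints $(d',c^*)=1$ and $(d',q/g)=1$ combine exactly into the single condition $(d',c/g)=1$ that you use in the $\phi$-computation, regardless of whether $c^*$ and $q/g$ share prime factors. So the proof is complete as written, and the hedge in your final paragraph can be dropped.
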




Next lemma is a generalization of a famous formula of Ramanujan,
$$\frac{\sigma_{s-1}(n)}{n^{s-1}}=\zeta(s)\sum_{\ell =1}^\infty \frac{c_\ell(n)}{\ell^s},$$
where $c_\ell(n)$ is the Ramanujan sum.

\begin{lemma}\label{divisorcharacter}
Let $\Re(s)>1$. Define a Dirichlet series  $$I(s,\chi^*,c^*,m)=\sum_{\ell=1}^\infty \frac{g(\chis,\ell c^*,m)}{\ell^s},$$
as a generating function for the non-primitive Gauss sums induced from $\chi^*$. It satisfies the identity
 $$\tau(\chi^*)\sigma_{s-1}(m,\overline{\chi^*})=m^{s-1}I(s,\chi^*,c^*,m)L(s,\chi^*).$$
\end{lemma}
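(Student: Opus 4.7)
The plan is to substitute the closed form for the non-primitive Gauss sum $g(\chi^*,\ell c^*,m)$ from Lemma~\ref{inducedgauss} into the definition of $I(s,\chi^*,c^*,m)$ and then reorganize the resulting double sum so that one factor becomes the reciprocal of $L(s,\chi^*)$ and the other becomes $\sigma_{s-1}(m,\overline{\chi^*})$ after a change of divisor. Throughout, the region $\Re(s)>1$ will provide absolute convergence to justify interchanging the order of summation.

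Concretely, I would first apply Lemma~\ref{inducedgauss} with $c=\ell c^*$, so that $c/c^*=\ell$, obtaining
\[
g(\chi^*,\ell c^*,m)=\tau(\chi^*)\sum_{d\mid (m,\ell)} d\,\chi^*\!\bra{\tfrac{\ell}{d}}\,\overline{\chi^*}\!\bra{\tfrac{m}{d}}\mu\!\bra{\tfrac{\ell}{d}}.
\]
Plugging this into $I(s,\chi^*,c^*,m)$ and swapping the order of summation (summing first over $d\mid m$ and then writing $\ell=dk$ for $k\geq 1$) yields
\[
I(s,\chi^*,c^*,m)=\tau(\chi^*)\sum_{d\mid m}\overline{\chi^*}\!\bra{\tfrac{m}{d}}\frac{d}{d^s}\sum_{k=1}^\infty\frac{\chi^*(k)\mu(k)}{k^s}.
\]

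Next I would recognize the inner $k$-sum as the standard Möbius identity $\sum_{k\geq 1}\chi^*(k)\mu(k)k^{-s}=1/L(s,\chi^*)$, valid for $\Re(s)>1$. Then I would perform the substitution $d\mapsto m/d$ in the outer sum to get
\[
\sum_{d\mid m}\overline{\chi^*}\!\bra{\tfrac{m}{d}}d^{1-s}=m^{1-s}\sum_{d'\mid m}\overline{\chi^*}(d')(d')^{s-1}=m^{1-s}\sigma_{s-1}(m,\overline{\chi^*}).
\]
Combining these, one obtains $I(s,\chi^*,c^*,m)=\tau(\chi^*)\,m^{1-s}\sigma_{s-1}(m,\overline{\chi^*})/L(s,\chi^*)$, which rearranges into the desired identity.

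The computation is essentially mechanical; there is no serious obstacle. The only point that needs mild care is the justification for interchanging sums. This is handled by absolute convergence: in the region $\Re(s)>1$, the Dirichlet series for $1/L(s,\chi^*)$ converges absolutely, and the $d$-sum is finite for each $m$, so Fubini applies without issue.
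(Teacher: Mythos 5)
Your proof is correct and is essentially the paper's own argument run in reverse: the paper expands $\tau(\chi^*)m^{1-s}\sigma_{s-1}(m,\overline{\chi^*})L(s,\chi^*)^{-1}$ as a product of Dirichlet series and identifies the resulting coefficients with $g(\chi^*,\ell c^*,m)$ via Lemma~\ref{inducedgauss}, while you substitute Lemma~\ref{inducedgauss} into $I(s,\chi^*,c^*,m)$ and factor out $1/L(s,\chi^*)$ and $m^{1-s}\sigma_{s-1}(m,\overline{\chi^*})$. Same key lemma, same M\"obius identity, same convergence region, so there is nothing to add.
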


\begin{proof}
We prove the equivalent formula $\tau(\chi^*)m^{1-s}\sigma_{s-1}(m,\overline{\chi^*})L(s,\chi^*)^{-1}=I(s,\chi^*,c^*,m)$.
Expand its both sides  
and the left hand side is $$\tau(\chis) \sum_{d|m}\frac{d\chibs(m/d)}{d^s}\sum_{n=1}^\infty \frac{\chis(n)\mu(n)}{n^s} = 
\tau(\chis) \sum_{\ell=1}^\infty \frac{\sum_{d|(m,\ell)} d\chibs(m/d)\mu(\ell/d)\chis(\ell/d)}{\ell^s},$$
which equals the right side by Lemma \ref{inducedgauss}.
\end{proof}

\begin{lemma}\label{lem:averageGauss}
	For any two positive integers $n$ and $m$, and a primitive Dirichlet character $\chis$ modulo $c^*$, we have
	\[
		\sum_{\substack{\ell d = n}} \chi^*(d) g(\chi^*,\ell c^*, m) =
		\begin{cases}
		\tau(\chi^*)\overline{\chi^*(m/n)} n, &\text{if } n|m,\\
		0 ,&\text{otherwise.} 
		\end{cases}
	\]
\end{lemma}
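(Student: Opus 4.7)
The plan is to substitute the explicit formula for the non-primitive Gauss sum from Lemma \ref{inducedgauss} (equivalently, Miyake's formula) and collapse the resulting double sum via Möbius inversion. In detail, writing $c = \ell c^*$ so that $c/c^* = \ell$, Lemma \ref{inducedgauss} gives
\[
g(\chi^*, \ell c^*, m) = \tau(\chi^*) \sum_{e \mid (m,\ell)} e\, \chi^*\!\left(\tfrac{\ell}{e}\right) \overline{\chi^*}\!\left(\tfrac{m}{e}\right) \mu\!\left(\tfrac{\ell}{e}\right).
\]
Plugging this into $\sum_{\ell d = n} \chi^*(d) g(\chi^*, \ell c^*, m)$, the summation becomes
\[
\tau(\chi^*) \sum_{\ell d = n}\; \sum_{e \mid (m,\ell)} e\, \chi^*(d)\chi^*\!\left(\tfrac{\ell}{e}\right) \overline{\chi^*}\!\left(\tfrac{m}{e}\right) \mu\!\left(\tfrac{\ell}{e}\right).
\]

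Next I would swap the order of summation, letting $e$ run over common divisors of $m$ and $n$ (note $e\mid \ell\mid n$ forces $e\mid n$), and substitute $\ell = e\ell'$ so that the inner sum is over factorizations $\ell' d = n/e$. Using complete multiplicativity of $\chi^*$, the factor $\chi^*(d)\chi^*(\ell')$ combines to $\chi^*(n/e)$, and the expression reduces to
\[
\tau(\chi^*) \sum_{\substack{e \mid m \\ e \mid n}} e\, \overline{\chi^*}\!\left(\tfrac{m}{e}\right) \chi^*\!\left(\tfrac{n}{e}\right) \sum_{\ell' \mid n/e} \mu(\ell').
\]
The inner Möbius sum vanishes unless $n/e = 1$, i.e.\ $e = n$.

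Thus only the term $e = n$ survives, and this requires $n \mid m$. In that case the whole expression collapses to $\tau(\chi^*) \cdot n \cdot \overline{\chi^*}(m/n) \cdot \chi^*(1) \cdot 1 = \tau(\chi^*)\overline{\chi^*(m/n)}\, n$, which is the stated formula; if $n \nmid m$ the sum is zero. There is no real obstacle here: the only point requiring slight care is correctly tracking the divisibility constraints when swapping summations (in particular verifying that $e \mid \ell$ together with $\ell \mid n$ forces $e \mid n$), after which Möbius inversion does all the work.
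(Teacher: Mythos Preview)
Your proof is correct. It differs from the paper's argument in presentation: the paper derives the lemma by equating Dirichlet coefficients in the identity of Lemma~\ref{divisorcharacter}, namely $\tau(\chi^*)\sigma_{s-1}(m,\overline{\chi^*})/m^{s-1} = I(s,\chi^*,c^*,m)L(s,\chi^*)$, where the right-hand side expands as $\sum_n n^{-s}\sum_{\ell d=n}\chi^*(d)g(\chi^*,\ell c^*,m)$ and the left as $\tau(\chi^*)\sum_{e\mid m}\overline{\chi^*(m/e)}\,e\cdot e^{-s}$. You instead substitute the explicit formula of Lemma~\ref{inducedgauss} directly and collapse with a M\"obius sum. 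Since Lemma~\ref{divisorcharacter} is itself proved from Lemma~\ref{inducedgauss} by matching Dirichlet coefficients, the two arguments are the same computation unwound to different depths: the paper routes it through a generating-function identity, while you carry out the finite sum by hand. Your version is slightly more self-contained (it does not invoke Lemma~\ref{divisorcharacter}), whereas the paper's version keeps everything inside the Dirichlet-series framework used throughout the article.
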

\begin{proof}
	We start with the formula,
	\[
		\frac{\tau(\chi^*) \sigma_{s-1}(m,\overline{\chi^*})}{m^{s-1}} = I(s,\chi^*,c^*,m) L(s,\chi^*).
	\]
	Both sides are Dirichlet series and we equate coefficients. The left hand side is given as
	\[
		\tau(\chi^*) \sum_{e|m} \frac{\overline{\chi^*(m/e)}e}{e^s},
	\]
	whereas the right hand side is
	\[
		\sum_{\ell=1}^\infty \frac{g(\chi^*,\ell c^*,m)}{\ell^s} 
		\sum_{d = 1}^\infty \frac{\chi^*(d)}{d^s} 
		= \sum_{n=1}^\infty \frac{\sum_{d\ell = n}\chi^*(d) g(\chi^*,\ell c^*, m)}{n^s}.\qedhere
	\]
\end{proof}


\section{The Voronoi formula}\label{sec:gln}

\subsection{Double Dirichlet series}

We begin by proving a convoluted version of \eqref{eq:HequalsG}. 

\begin{theorem}\label{gln-average-average-voronoi}
For $N\geq 3$, $\vec{q}=(q_1,\dots,q_{N-2})\in \mathbb{N}^{N-2}$, and $n\in \mathbb{N}$, define
$$\hhh(\vec{q};n,s)  :=   \sum_{d_1| q_1, \ldots, d_{N-2}|q_{N-2}}
		\frac{\chi^*(d_1\cdots d_{N-2})}{(d_1 \cdots d_{N-2})^s}  \sum_{d\ell = n} \chi^*(d) H(\vec{q}'; \ell c^*, \chi^*,s) \quad  \text{ for }\Re(s)\gg 1  $$
and 
$$\ggg(\vec{q};n,s)  :=   \sum_{d_1| q_1, \ldots, d_{N-2}|q_{N-2}}
		\frac{\chi^*(d_1\cdots d_{N-2})}{(d_1 \cdots d_{N-2})^s}  \sum_{d\ell = n} \chi^*(d) G(\vec{q}'; \ell c^*, \chi^*,s)\quad \text{ for }\Re(1-s)\gg 1  ,$$
where we denote for abbreviation
\begin{equation}\label{eq:qprime}
\vec{q}' = (\tfrac{q_1d}{d_1}, \tfrac{q_2d_1}{d_2},\ldots,\tfrac{q_{N-2}d_{N-3}}{d_{N-2}}).
\end{equation}
The functions $\hhh(\vec{q};n,s)$ and $\ggg(\vec{q};n,s)$ have 
analytic continuation to all $s\in \C$ and these analytic continuations satisfy
\begin{equation}\label{eq:HGside}
\hhh(\vec{q};n,s) = \ggg(\vec{q};n,s).
\end{equation}
\end{theorem}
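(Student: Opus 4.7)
The plan is to compute the double Dirichlet series $Z(s,w)$ from \eqref{eq:zsw} as a Dirichlet series in $w$ in two different ways and to equate coefficients of $n^{-2w}$. In the region where both $\Re(s)$ and $\Re(w)-\Re(s)$ are large, I expand the numerator Dirichlet series of \eqref{eq:zsw} directly, and rewrite $1/L(2w-2s+1,\overline{\chi^*})$ via its Möbius expansion. Collecting by powers of $n^{-2w}$, the inner sum involves a product of a Fourier coefficient $A(q_{N-2},\ldots,q_1,m)$ with a factor of the form $A(d,1,\ldots,1)\chi^*(d)$; the Hecke relation \eqref{glnheckerelation2} rewrites this product as a divisor sum whose summands carry precisely the shifted tuple $\vec{q}'$ of \eqref{eq:qprime}. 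To reintroduce the $L$-function in the denominator of $Z(s,w)$ as a Gauss-sum series, invoke Lemma \ref{divisorcharacter} (applied with $\chi^*$ replaced by $\overline{\chi^*}$ and $s$ by $2w+1-2s$), which identifies $\tau(\overline{\chi^*})\sigma_{2w-2s}(m,\chi^*)/(m^{2w-2s} L(2w-2s+1,\overline{\chi^*}))$ with $\sum_\ell g(\overline{\chi^*},\ell c^*,m)/\ell^{2w+1-2s}$. This produces the Gauss-sum factor inside $H(\vec{q}'; \ell c^*, \chi^*, s)$ of \eqref{eq:Hdefn}, and the $(\ell c^*/c^*)^{1-2s}$ normalization falls out of the $w$-exponent. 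After matching indices, the first expansion takes the shape $Z(s,w)\cdot(\text{explicit }s,w\text{-factor})=\sum_{n=1}^\infty \mathcal{H}(\vec{q};n,s)\,n^{-2w}$.

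For the second expansion, apply the functional equation \eqref{gln-FE-character} to replace $L(s,F\times\chi^*)$ by $\tau(\chi^*)^N(c^*)^{-Ns}G(s)L(1-s,\widetilde F\times\overline{\chi^*})$, and repeat the expansion in the mirror region where $\Re(1-s)$ and $\Re(w)-\Re(1-s)$ are large, using $B(m_1,\ldots,m_{N-1})=A(m_{N-1},\ldots,m_1)$ for the coefficients of $\widetilde F$. In place of Lemma \ref{divisorcharacter}, I use Lemma \ref{lem:averageGauss} iteratively to convert the resulting Möbius-weighted $\chi^*$-sums into the nested product of Gauss sums $g(\chi^*,\tfrac{q_1\cdots q_j c}{d_1\cdots d_j},d_{j+1})$ that appears in \eqref{eq:Gdefn}; the divisibility conditions $d_j c^*\mid (q_1\cdots q_j c)/(d_1\cdots d_{j-1})$ will fall out of the support condition of the Gauss sum in Lemma \ref{inducedgauss2}. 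The Hecke relation \eqref{glnheckerelation2} again reshuffles the coefficients into the form $A(n,d_{N-2},\ldots,d_1)$ attached to the shifted tuple $\vec{q}'$. Assembling everything, the same $Z(s,w)$ times the same explicit $s,w$-factor equals $\sum_{n=1}^\infty \mathcal{G}(\vec{q};n,s)\,n^{-2w}$.

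Uniqueness of Dirichlet series then forces $\mathcal{H}(\vec{q};n,s)=\mathcal{G}(\vec{q};n,s)$ for every $n\geq 1$, initially for $s$ in the overlap (after analytic continuation in $w$) of the two strips in which the respective expansions converge. To extend the identity to all $s\in\mathbb{C}$, I use that $Z(s,w)$ is meromorphic on $\mathbb{C}^2$ (every $L$-factor in \eqref{eq:zsw} is meromorphic, with any poles tracked explicitly through the zeta-like factor $L(2w-2s+1,\overline{\chi^*})$), and that for each fixed $n$ the coefficient of $n^{-2w}$ is extractable by a contour integral in $w$. This gives analytic continuations of both $\mathcal{H}(\vec{q};n,s)$ and $\mathcal{G}(\vec{q};n,s)$ to entire functions of $s$, which must then agree by the identity theorem.

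The hardest step will be the combinatorial bookkeeping matching the inner-sum structure produced by iterated applications of \eqref{glnheckerelation2} and Lemmas \ref{divisorcharacter}, \ref{lem:averageGauss}, \ref{inducedgauss2} against the nested divisor sum $\sum_{d_1|q_1}\cdots\sum_{d_{N-2}|q_{N-2}}$ with weight $\chi^*(d_1\cdots d_{N-2})/(d_1\cdots d_{N-2})^s$ and the shifted tuple $\vec{q}'$ in the definitions \eqref{eq:Hdefn}, \eqref{eq:Gdefn}. Getting the exponent of each $d_i$ correct, and reproducing the precise nesting $\tfrac{q_1\cdots q_{j-1}c}{d_1\cdots d_{j-1}}$ as the modulus in each successive Gauss sum in \eqref{eq:Gdefn}, is where the delicate indexing lies; a clean way to organize it may be to induct on $N$, peeling off the innermost Gauss sum at each stage via one application of Lemma \ref{lem:averageGauss}.
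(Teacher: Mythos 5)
Your proposal follows essentially the same route as the paper: the same auxiliary series $Z(s,w)$ from \eqref{eq:zsw}, expanded as a Dirichlet series in $w$ twice (once directly, once after applying \eqref{gln-FE-character} to $L(s,F\times\chi^*)$ only), with the Hecke relations plus Lemmas \ref{divisorcharacter} and \ref{lem:averageGauss} doing the arithmetic work, and uniqueness of Dirichlet series in $w$ equating the coefficients. Two points need repair, though neither changes the strategy. First, in the \emph{first} expansion the twisted $L$-function \eqref{eq:twistedLfunction} has coefficients $A(1,\ldots,1,m)\chi^*(m)$, not $A(m,1,\ldots,1)\chi^*(m)$, so the relation you need there is \eqref{glnheckerelation1}; relation \eqref{glnheckerelation2} is the one used on the dual side after the functional equation, where the coefficients $B(1,\ldots,1,m)=A(m,1,\ldots,1)$ appear. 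As written, your first expansion would not produce the tuple $\vec{q}'$ attached to $H$ in \eqref{eq:Hdefn}. Second, the analytic continuation step should not rest on extracting the $n^{-2w}$ coefficient ``by a contour integral'' from the meromorphic continuation of $Z(s,w)$: coefficient extraction is only legitimate where the $w$-series converges, and meromorphy of $Z$ in $(s,w)$ alone does not continue the coefficient functions in $s$. The clean device (the one the paper uses) is already inside your first expansion: for every fixed $s$, taking $\Re(w)$ large and expanding $L_{\vec q}(2w-s,F)$ together with the M\"obius series for $L(2w-2s+1,\overline{\chi^*})^{-1}$ exhibits the $w$-coefficient as $L(s,F\times\chi^*)$ times a finite Dirichlet polynomial in $s$, hence entire; since this same coefficient equals $\tau(\overline{\chi^*})^{-1}\mathcal{H}(\vec q;n,s)$ for $\Re(s)\gg1$ and $\tau(\overline{\chi^*})^{-1}\mathcal{G}(\vec q;n,s)$ for $\Re(1-s)\gg1$, both continuation and equality follow at once.

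One organizational remark on the combinatorics you flag as delicate: rather than building up the nested Gauss sums of \eqref{eq:Gdefn} forward from the functional-equation expansion via iterated use of Lemma \ref{lem:averageGauss} (and \ref{inducedgauss2}), it is noticeably easier to run that computation backwards, i.e.\ start from $\sum_n \mathcal{G}(\vec q;n,s)n^{-2w}$, substitute \eqref{eq:Gdefn}, and let Lemma \ref{lem:averageGauss} collapse the sums over $d$ and the $d_i$ down to the expansion you obtained from \eqref{gln-FE-character}; the divisibility constraints are then already built into \eqref{eq:Gdefn} and need not be rederived from the vanishing of Gauss sums.
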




\begin{proof}
The region of absolute convergence for $\hhh(\vec{q};n,s)$ is a right half plane $\Re(s)\gg 1$, and the region of absolute convergence of $\ggg(\vec{q};n;s)$ is a left half plane $\Re(1-s) \gg1$.
Let $Z(s,w)$ be defined as in \eqref{eq:zsw}.
For any $s \in \C$ and $w$ with $\Re(w)$ large enough so that $\Re(2w-s)\gg 1 $ and $\Re(w-s)> 0$, 
writing $L_{\vec{q}}(2w-s,F)$ and $L(2w-2s+1,\chibs)\inv$
as Dirichlet series, we derive 
\begin{equation*}
Z(s,w) = L(s,F\times\chis)\sum_{n=1}^\infty
 \frac{\sum_{d|n}A(q_{N-2},\dots,q_1,d)d^s\chibs(n/d)\mu(n/d)(n/d)^{2s-1}}{n^{2w}}.
\end{equation*}
Hence, we have 
$$	Z(s,w) = \sum_{n=1}^\infty \frac{a_n(s)}{n^{2w}}.
$$
where 
$$a_n(s)=L(s,F\times \chis)\sum_{d|n}A(q_{N-2},\dots,q_1,d)d^s\chibs(n/d)\mu(n/d)(n/d)^{2s-1}.$$
Here $a_n(s)$ is an analytic function of $s\in \C$, because $L(s,F\times\chi^*)$ is entire. 
The computation below shows that $a_n(s)$ equals either side of \eqref{eq:HGside} in their respective regions of absolute convergence, up to scaling by a constant $\tau(\chibs)$. This proves the analytic continuation of $\hhh$ and $\ggg$ as well as their equality. 

For $\Re(s)\gg 1,\Re(w-s) >0 $, we expand the two $L$-functions in the numerator of $Z(s,w)$ as Dirichlet series, obtaining
	\begin{align*}
		Z(s,w)&=\frac{1}{L(2w-2s + 1,\overline{\chi^*})} \sum_{n,m=1}^\infty 
		\frac{A(q_{N-2}, \ldots, q_1, n) A(1,\ldots, 1, m) \chi^*(m)}{n^{2w-s}m^s}\\
		&=\frac{1}{L(2w-2s + 1,\overline{\chi^*})} \sum_{n,m=1}^\infty 
		\frac{\chi^*(m)}{n^{2w-s}m^s} 
		\mspace{-20mu}
		\sum_{\substack{d_0 d_1 \cdots d_{N-1}= m\\ d_0|n, d_1| q_1, \ldots, d_{N-2}|q_{N-2}}}
		\mspace{-25mu}
		A\left(\tfrac{q_{N-2}d_{N-3}}{d_{N-2}}, \ldots, \tfrac{q_1d_0}{d_1}, \tfrac{nd_{N-1}}{d_0}\right),
		\end{align*}
where we have used the Hecke relation \eqref{glnheckerelation1}.
We change the variable $n/d_0\to n$ and combine $h = nd_{N-1}$, giving
\begin{align*}
Z(s,w)&=\frac{1}{L(2w-2s + 1,\overline{\chi^*})} \sum_{n,d_0,d_{N-1}=1}^\infty 
\sum_{ \substack{d_i| q_i\\ i=1,\ldots, N-2}}
\frac{\chi^*(d_0\dots d_{N-1})}{n^{2w-s}d_0^{2w-s}(d_0\dots d_{N-1})^s} \\
&\cquad \times A\left(\tfrac{q_{N-2}d_{N-3}}{d_{N-2}}, \ldots, \tfrac{q_1d_0}{d_1}, nd_{N-1}\right)\\
&=		\frac{1}{L(2w-2s + 1,\overline{\chi^*})} \sum_{d_0,\nn =1}^\infty 
\sum_{ \substack{d_i| q_i\\i = 1,\ldots, N-2}}
\frac{\chi^*(d_0\dots d_{N-2})}{d_0^{2w-s}(d_0\dots d_{N-2})^s} \\
&\cquad\times A\left(\tfrac{q_{N-2}d_{N-3}}{d_{N-2}}, \ldots, \tfrac{q_1d_0}{d_1}, \nn\right)\frac{\sigma_{2w-2s}(\nn,\chis)}{\nn^{2w-s}}.
\end{align*}
Applying Lemma \ref{divisorcharacter}, we get 
\begin{align*}
Z(s,w)=& \tau(\chibs)^{-1} \sum_{d_0=1}^\infty 
\sum_{ \substack{d_i| q_i\\ i=1,\ldots, N-2}}
\frac{\chi^*(d_0\dots d_{N-2})}{d_0^{2w}(d_1\dots d_{N-2})^s} 
\sum_{\nn=1}^\infty
 \frac{A\left(\tfrac{q_{N-2}d_{N-3}}{d_{N-2}}, \ldots, \tfrac{q_1d_0}{d_1}, \nn\right)}{\nn^s}
\sum_{\ell=1}^\infty \frac{g(\chibs, \ell c^*,\nn) }{\ell^{2w-2s+1}}.
\end{align*}
Therefore we reach
	\begin{equation}\label{sumofH}
		Z(s,w) = \tau(\chibs)^{-1}  \sum_{n=1}^\infty \frac{1}{n^{2w}}\sum_{  d_1| q_1, \ldots, d_{N-2}|q_{N-2}}
		\frac{\chi^*(d_1\cdots d_{N-2})}{(d_1 \cdots d_{N-2})^s}  \sum_{d\ell = n} \chi^*(d) H(\vec{q}'; \ell c^*, \chi^*,s),
	\end{equation}
	where $\vec{q}'$ is defined in \eqref{eq:qprime}.
	\\


On the other hand, let us apply the functional equation \eqref{gln-FE-character} to $L(s,F\times \chis)$ in $Z(s,w)$, giving  
$$Z(s,w)=\frac{G(s)\tau(\chis)^{N}}{{c^*}^{Ns}}
\frac{L_{\vec{q}}(2w-s, F) L(1-s,\widetilde F\times\chibs)}{L(2w-2s + 1,\overline{\chi^*})}.$$
Given $\Re(1-s)\gg 1$ and $\Re(2w-s)\gg 1$, 
we open the expression as a Dirichlet series,
\begin{align*}
Z(s,w)=&\frac{G(s)\tau(\chis)^N{c^*}^{-Ns}}{L(2w-2s + 1,\overline{\chi^*})} \sum_{n,m=1}^\infty 
		\frac{A(q_{N-2}, \ldots, q_1, n) A(m,1,\dots,1) \chibs(m)}{n^{2w-s}m^{1-s}}
\\
=&\frac{G(s)\tau(\chis)^N{c^*}^{-Ns}}{L(2w-2s + 1,\overline{\chi^*})} \sum_{n,m=1}^\infty 
		\frac{\chibs(m)}{n^{2w-s}m^{1-s}} 
		\sum_{\substack{d_0 d_1 \cdots d_{N-1}= m\\ d_0|n, d_1| q_1, \ldots, d_{N-2}|q_{N-2}}}
		A\left(\tfrac{q_{N-2}d_{N-1}}{d_{N-2}}, \ldots, \tfrac{q_1d_2}{d_1}, \tfrac{nd_{1}}{d_0}\right)\\
		=&\frac{G(s)\tau(\chis)^N{c^*}^{-Ns}}{L(2w-2s + 1,\overline{\chi^*})} \sum_{n,m=1}^\infty 
		\sum_{\substack{d_0 d_1 \cdots d_{N-1}= m\\ d_0|n, d_1| q_1, \ldots, d_{N-2}|q_{N-2}}}
		\mspace{-35mu}
		\frac{\chibs(d_0d_1\dots d_{N-1})A\left(\tfrac{q_{N-2}d_{N-1}}{d_{N-2}}, \ldots, \tfrac{q_1d_2}{d_1}, \tfrac{nd_{1}}{d_0}\right)}{(n/d_0)^{2w-s}d_0^{1+2w-2s}(d_1\dots d_{N-1})^{1-s}}, 
\end{align*}		
where we have combined the Fourier coefficients by the Hecke relation \eqref{glnheckerelation2}.We change the variable $n/d_0\to n$. Then the sum over $d_0$ cancels with $L(2w-2s+1,\chibs)$ in the denominator, giving
\begin{align}
Z(s,w) &=\frac{G(s)\tau(\chis)^N{c^*}^{-Ns}}{L(2w-2s + 1,\overline{\chi^*})} \sum_{n,d_0,d_{N-1}=1}^\infty 
			\sum_{\substack{d_i| q_i\\ i = 1, \ldots, N-2}}
			\mspace{-15mu}
		\frac{\chibs(d_0d_1\dots d_{N-1})A\left(\tfrac{q_{N-2}d_{N-1}}{d_{N-2}}, \ldots, \tfrac{q_1d_2}{d_1}, d_{1}n\right)}{n^{2w-s}d_0^{1+2w-2s}(d_1\dots d_{N-1})^{1-s}}
		\notag \\ 
	 &= \frac{G(s)\tau(\chis)^N}{{c^*}^{Ns}} \sum_{n, d_{N-1}=1}^\infty 
			\sum_{\substack{d_i| q_i\\ i =1,\ldots, N-2}}
		\frac{\chibs(d_1\dots d_{N-1})A\left(\tfrac{q_{N-2}d_{N-1}}{d_{N-2}}, \ldots, \tfrac{q_1d_2}{d_1}, d_{1}n\right)}{n^{2w-s}(d_1\dots d_{N-1})^{1-s}} . \label{sumofG2}
\end{align}

If we denote the right hand side of \eqref{eq:HGside} by $\tau(\chibs) b_n(s)$, our goal is to transform \eqref{sumofG2} into $R:= \sum_{n=1}^\infty b_n(s)n^{-2w}$. But at this point it is easier to start from $R$. More explicitly, we have
\begin{equation}\label{sumofG}
R=\tau(\chibs)^{-1} \sum_{\nn=1}^\infty \frac{1}{\nn^{2w}}\sum_{  d_1| q_1, \ldots, d_{N-2}|q_{N-2}}
		\frac{\chi^*(d_1\cdots d_{N-2})}{(d_1 \cdots d_{N-2})^s}  \sum_{d\ell = \nn} \chi^*(d) G(\vec{q}'; \ell c^*, \chi^*,s).\end{equation}
Here $\vec{q}'$ has been defined in \eqref{eq:qprime}.  
We plug in the definition of $G(\vec{q}';\ell c^*,\chis,s)$ from \eqref{eq:Gdefn} for $\vec{q}'$, giving
%
\begin{align*}
		G&(\vec{q}'; \ell c^*, \chi^*,s) 
		=\frac{G(s) \chis(-1)  }{{{c^*}^{Ns-1}\ell^{(N-2)s}}}
\sum_{f_1|\frac{q_1d\ell }{d_1}}\sum_{f_2 | \frac{q_1q_2d\ell }{f_1d_2}}   
		\cdots \sum_{f_{N-2}| \frac{q_1\ldots q_{N-2} d\ell }{f_1 \ldots f_{N-3}d_{N-2}}}		
\\
		&\times \sum_{n=1}^\infty 
		\frac{A(n,f_{N-2}, \ldots, f_1)}
		{n^{1-s}f_1 f_2 \cdots f_{N-2}}
		\frac{f_1^{(N-1)s}f_2^{(N-2)s}\cdots f_{N-2}^{2s}}
		{q_1^{(N-2)s}q_2^{(N-3)s}\cdots q_{N-2}^{s}}\frac{(d_1\dots d_{N-2})^{s}}{d^{(N-2)s}}\\
		&\times g(\chi^*, \ell c^*,f_1) g(\chi^*, \tfrac{q_1d\ell c^*}{f_1d_1},f_2 ) 
		\cdots g(\chi^*, \tfrac{q_1\cdots q_{N-3}d\ell c^*}{f_1 \cdots f_{N-3}d_{N-3}}, f_{N-2})
		g(\chi^*, \tfrac{q_1\cdots q_{N-2}d\ell c^*}{f_1\cdots f_{N-2}d_{N-2}},n).
	\end{align*}
We substitute $G(\vec{q}'; \ell c^*, \chi^*,s) $ with this expression in \eqref{sumofG} and change the orders of summation between $f_i$ and $d_i$. The summations over $d$ and $d_i$ collapse with the repeated use of Lemma \ref{lem:averageGauss}, giving
	\begin{align*}
		R=&\; \tau(\chibs)^{-1} \frac{G(s) \chis(-1)}{{c^*}^{Ns-1}}\sum_{\nn =1}^\infty 
		\sum_{n=1}^\infty	
		\sum_{\substack{\nn|f_1\\ f_1 | q_1\nn}}
		\sum_{\substack{\frac{ q_1 \nn}{f_1}|f_2\\ f_2| \frac{q_1q_2\nn}{f_1}}}\cdots
		\sum_{\substack{\frac{ q_1\cdots q_{N-3}\nn}{f_1\cdots f_{N-3}}|f_{N-2}\\f_{N-2} |\frac{ q_1 \cdots q_{N-2} \nn}{f_1 \cdots f_{N-3} } }}
\sum_{\frac{ q_1\dots q_{N-2} \nn }{f_1\cdots f_{N-2}}|n}
\\
		& \frac{\tau(\chis)^{N-1}}{\nn^{2w}} \chibs\bra{\frac{f_1}{\nn}}
		    \chibs\bra{\frac{f_1f_2}{\nn q_1}}
		    \dots
		    \chibs\bra{\frac{f_1f_2\cdots f_{N-2}}{\nn q_1\cdots q_{N-3}}}
		    \chibs\bra{\frac{f_1f_2\cdots f_{N-2}n}{\nn q_1\cdots q_{N-2}}}
		    \\
		    &\times  \bra{\frac{q_1}{f_1}}^{N-2}
		    \bra{\frac{q_2}{f_2}}^{N-3}
		    \cdots\bra{\frac{q_{N-2}}{f_{N-2}}}\nn^{N-1-Ns+2s}
\frac{A(n,f_{N-2},\ldots,f_1)}{n^{1-s}f_1\cdots f_{N-2}}\frac{f_1^{(N-1)s}\cdots f_{N-2}^{2s}}{q_1^{(N-2)s}\cdots q_{N-2}^s}.
\end{align*}
Define the variables $e_1=f_1/\nn$ and 
$e_i =f_1\cdots f_i/q_1\cdots q_{i-1}\nn$ for $i = 2, \ldots N-2$. 
The double conditions under the sums simplify to $e_i | q_i$. Also define $e_{N-1} = f_1\cdots f_{N-2}n/ \nn q_1\cdots q_{N-2}$ 
and it runs over all positive integers. Finally noting 
$\tau(\chibs) \inv =\chis(-1) \tau(\chis)/c^*$ , we get
\begin{equation*}	
	R	=
	\frac{G(s)\tau(\chis)^{N}}{{c^*}^{Ns}}
	\sum_{\nn ,e_{N-1}=1}^\infty \frac{1}{\nn^{2w-s}}  
	\sum_{\substack{e_i|q_i\\ i = 1,\ldots, N-2}}\frac{\chibs(e_1\cdots e_{N-2}e_{N-1})}{(e_1\cdots e_{N-1})^{1-s}}  A(\tfrac{e_{N-1}q_{N-2}}{e_{N-2}}, \ldots,\tfrac{e_2q_1}{e_1}, e_1h),
	\end{equation*}		
which in turn, by \eqref{sumofG2}, equals $Z(s,w)$ as well as  \eqref{sumofH}.  We complete the proof after applying the uniqueness theorem for Dirichlet series (Theorem 11.3 of \cite{apostol})
to the equality between \eqref{sumofH} and \eqref{sumofG}.
\end{proof}

\begin{remark}\label{remark-GL2}
The above proof works for $N\geq 3$ but not for $N=2$. We can prove the Voronoi formula for $\ssl{2}{Z}$ similarly and easily by considering 
$$Z(s,w)=\frac{L(2w-s,F)L(s,F\times\chis)}{L(2w-2s+1,\chibs) L(2w,\chis )}.$$ 
We have from the Hecke relations on GL(2)
$$Z(s,w)=\tau(\chibs)\inv \sum_{\ell=1}^\infty  \sum_{n=1}^\infty \frac{A(n)}{n^s}\frac{g(\chibs,\ell c^*,n)}{\ell^{1+2w-2s}},$$
and applying the functional equation for $L(s,F\times\chis)$ we have
$$Z(s,w)=\tau(\chis){c^*}^{-2s}G(s) \sum_{\ell=1}^\infty \sum_{n=1}^\infty
\frac{A(n)}{n^{1-s}}\frac{g(\chis,\ell c^*,n)}{\ell^{2w}}.$$
Applying the uniqueness theorem for Dirichlet series 
to the variable $w$, we get the Voronoi formula with Gauss sums on $\GL(2)$.
\end{remark}


\begin{proposition}\label{mobius}
Equation \eqref{eq:HequalsG} is equivalent to Theorem \ref{gln-average-average-voronoi}.
\end{proposition}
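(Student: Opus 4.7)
The plan is to prove the two implications separately. The forward direction, that $\eqref{eq:HequalsG}$ entails Theorem \ref{gln-average-average-voronoi}, is immediate: $\hhh$ and $\ggg$ are defined as identical finite linear combinations of $H(\vec{q}'; \ell c^*, \chi^*, s)$ and $G(\vec{q}'; \ell c^*, \chi^*, s)$ weighted by the same factors $\chi^*(d_1\cdots d_{N-2} d)/(d_1\cdots d_{N-2})^s$, so termwise equality $H=G$ forces $\hhh=\ggg$, and their analytic continuations are inherited. The real content is the reverse implication.

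For that, I would fix $s$ (treated as a parameter, which is legitimate by meromorphic continuation), set $D(\vec{q}, \ell) := H(\vec{q}; \ell c^*, \chi^*, s) - G(\vec{q}; \ell c^*, \chi^*, s)$, and rewrite the hypothesis $\hhh=\ggg$ as
\begin{equation*}
\sum_{\substack{d_i \mid q_i \\ 1 \le i \le N-2}} \sum_{d \mid n} \frac{\chi^*(d_1 \cdots d_{N-2} d)}{(d_1 \cdots d_{N-2})^s}\, D\bigl(\vec{q}'(d,\vec{d}),\, n/d\bigr) = 0 \qquad \text{for every } (\vec{q}, n),
\end{equation*}
where $\vec{q}'(d, \vec{d}) = (q_1 d/d_1,\, q_2 d_1/d_2,\, \ldots,\, q_{N-2} d_{N-3}/d_{N-2})$. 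I would then prove $D \equiv 0$ by double induction: primary on $\ell \in \mathbb{N}$, and, at each fixed $\ell$, secondary on $\vec{q} \in \mathbb{N}^{N-2}$ in the lexicographic order $<_{\mathrm{lex}}$ comparing coordinates left to right.

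The base case $\ell = 1,\, \vec{q} = (1, \ldots, 1)$ forces $d=d_i=1$ throughout and immediately yields $D((1,\ldots,1), 1) = 0$. For the inductive step at $(\vec{q}, \ell)$, take $n = \ell$. The diagonal tuple $d = 1,\, \vec{d} = (1, \ldots, 1)$ contributes $D(\vec{q}, \ell)$ with coefficient $\chi^*(1)=1 \neq 0$. Every other tuple falls into exactly one of two cases. If $d > 1$, then $n/d < \ell$ and the primary hypothesis gives $D(\vec{q}'(d, \vec{d}), n/d) = 0$. If $d = 1$ and $k := \min\{i : d_i > 1\}$ is well-defined, then setting $d_0 := d = 1$ we have $d_1 = \cdots = d_{k-1} = 1$, so $q_i'(1, \vec{d}) = q_i$ for $i < k$ while $q_k'(1, \vec{d}) = q_k/d_k < q_k$; hence $\vec{q}'(1, \vec{d}) <_{\mathrm{lex}} \vec{q}$ and the secondary hypothesis gives $D(\vec{q}'(1, \vec{d}), \ell) = 0$. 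Only the diagonal survives, so $D(\vec{q}, \ell) = 0$, closing the induction.

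The main (mild) obstacle is choosing a well-order that strictly decreases under every non-trivial $(d, \vec{d})$. Natural candidates like the product $q_1 \cdots q_{N-2} \cdot \ell$ are only weakly monotonic (invariant precisely when $d_{N-2} = 1$), and a reverse lex order on $\vec{q}$ breaks down because raising an early $d_i$ can inflate a later coordinate of $\vec{q}'$. The combination of primary induction on $\ell$ (which absorbs every $d > 1$ uniformly) with left-to-right lex induction on $\vec{q}$ (which absorbs the $d = 1$ cases via the strict decrease at the first non-unit $d_k$) is what makes the reduction work cleanly for all $N \ge 3$.
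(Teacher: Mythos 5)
Your argument is correct, but it inverts the convolution by a genuinely different route than the paper. The paper's proof is an explicit M\"obius inversion: it forms the finite combination $\TT = \sum_{e_0|n}\sum_{e_1|q_1e_0}\cdots\sum_{e_{N-2}|q_{N-2}e_{N-3}} \frac{\mu(e_0\cdots e_{N-2})\chi^*(e_0\cdots e_{N-2})}{(e_1\cdots e_{N-2})^s}\,\hhh\bra{\tfrac{q_1e_0}{e_1},\dots,\tfrac{q_{N-2}e_{N-3}}{e_{N-2}};\tfrac{n}{e_0},s}$, expands $\hhh$ by its definition, substitutes $a_i=e_id_i$, and lets the M\"obius sums force $a_i=1$, giving $\TT=H(\vec{q};nc^*,\chi^*,s)$; the identical manipulation with $\ggg$ gives $G(\vec{q};nc^*,\chi^*,s)$, so \eqref{eq:HequalsG} follows from $\hhh=\ggg$. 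You instead show the convolution is invertible abstractly, by well-founded double induction (primary on $\ell$, secondary lexicographic on $\vec{q}$), isolating the diagonal term $d=d_1=\cdots=d_{N-2}=1$ and absorbing every off-diagonal term into the hypothesis; your bookkeeping is right --- $d>1$ strictly lowers $\ell$, while $d=1$ with first nonunit $d_k$ strictly lowers $\vec{q}$ in left-to-right lex order even though later coordinates of $\vec{q}'$ may grow (which is indeed why naive size or reverse-lex orders fail) --- and the possible vanishing of the coefficients $\chi^*(d\,d_1\cdots d_{N-2})$ when these share a factor with $c^*$ is harmless since only the diagonal coefficient must be nonzero. What the paper's route buys is a closed-form inversion formula, which exhibits each individual $H(\vec{q};nc^*,\chi^*,s)$ and $G(\vec{q};nc^*,\chi^*,s)$ as a finite linear combination of the $\hhh$'s, resp.\ $\ggg$'s, so their analytic continuation to all $s$ is immediate; your route is more elementary and sidesteps verifying that the M\"obius sums collapse. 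One point you should tighten: you cannot literally fix $s$ and define $D(\vec{q},\ell)=H-G$ at the outset, since $H$ converges only in a right half-plane and $G$ only in a left half-plane, so before continuation is established $D$ is defined nowhere; the fix is to run the same induction on the identity itself, which at each step expresses $H(\vec{q};\ell c^*,\chi^*,s)$ as $\hhh(\vec{q};\ell,s)$ minus finitely many $H$'s at earlier points (and likewise for $G$), thereby producing the analytic continuation of each $H$ and $G$ and their equality simultaneously.
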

\begin{proof}
Construct the following summation
\begin{eqnarray*}
\TT  &:=&\sum_{e_0|n} \sum_{e_1| q_1e_0}\cdots \sum_{e_{N-2}|q_{N-2}e_{N-3}}
\frac{\mu(e_0\cdots e_{N-2})\chis(e_0\cdots e_{N-2})}{(e_1\cdots e_{N-2})^s}  \hhh  \bra{\frac{q_1e_0}{e_1},\dots,\frac{q_{N-2}e_{N-3}}{e_{N-2}} ; \frac{n}{e_0}  ,s}\\
&=& \sum_{e_0|n} \sum_{e_1| q_1e_0}\cdots \sum_{e_{N-2}|q_{N-2}e_{N-3}}
\frac{\mu(e_0\cdots e_{N-2})\chis(e_0\cdots e_{N-2})}{(e_1\cdots e_{N-2})^s}   
 \sum_{\substack{ d_i| q_i e_{i-1}/e_i\\i=1,\dots,N-2}}
		\frac{\chi^*(d_1\cdots d_{N-2})}{(d_1 \cdots d_{N-2})^s}   \\
&&	\aquad \times		\sum_{d_0| n/e_0}  
		\chi^*(d_0) H\bra{ \frac{q_1 e_0d_0 }{e_1d_1},\dots, \frac{q_{N-2}e_{N-3}d_{N-3}}{e_{N-2}d_{N-2}}  ; \frac{n}{e_0d_0} c^*   ,\chis,s        }   .
\end{eqnarray*}
Change variables $e_id_i\to a_i$ for $i=0,\dots,N-2$, and change orders of summation, getting
\begin{eqnarray*}
	\TT &=& \sum_{a_0|n}\sum_{e_0|a_0} \sum_{a_1 | q_1e_0} \sum_{e_1|a_1} \cdots \sum_{a_{N-2}|q_{N-2}e_{N-3} }\sum_{e_{N-2}|a_{N-2}} \frac{\chi^*(a_0\cdots a_{N-2})}{(a_1\cdots a_{N-2})^s}\\
	&& \bquad\times H\left(\frac{q_1a_0}{a_1}, \frac{q_2a_1}{a_2}, \ldots, \frac{q_{N-2}a_{N-3}}{a_{N-2}}; \frac{nc^*}{a_0},\chi^*,s\right) \mu(e_0) \cdots\mu(e_{N-2}).
\end{eqnarray*}		
One by one the M\"obius summation over $e_i$ will force $a_i = 1$, and we obtain $\TT = H(\vec{q}; nc^*, \chi^*, s).$ By Theorem \ref{gln-average-average-voronoi}, we have $\hhh = \ggg$ and and the same calculations yield $T=G(\vec{q}; nc^*, \chis, s)$. This proves the theorem.
\end{proof}

\subsection{Equivalence between equations \eqref{eq:glnVoronoi} and \eqref{eq:HequalsG}}\label{subsec:equivalent}

First we prove a lemma showing that the hyper-Kloosterman sum on the right hand side of \eqref{eq:glnVoronoi} becomes a product of $(N-2)$ Gauss sums after averaging against a Dirichlet character.
\begin{lemma}\label{lem:averageGLnKloosterman}
	Let $\chi$ be a Dirichlet character modulo $c$ which is induced from the primitive 
	character $\chi^*$ modulo $c^*$.
	Let $\vec{q} = (q_1, \ldots, q_{N-2})$ and $\vec{d} = (d_1,\ldots,d_{N-2})$ be 
	two tuples of positive integers, and assume that all the divisibility conditions in 
	\eqref{eq:divisibilityConditions} are met.
Consider the summation
	\[
		S := \sum_{\substack{a \mod c\\(a,c)=1}} \chi(a) \Kl(a,n,c;\vec{q},\vec{d}).
	\]
	The quantity $S$ is zero unless the divisibility conditions
\begin{equation}	\label{divisibilitycondition2}	d_1c^*|q_1c, \quad d_2c^*\left| \frac{q_1q_2c}{d_1},\right. \quad d_3c^*\left| \frac{q_1q_2q_3c}{d_1d_2},
		\quad \ldots, \quad d_{N-2}c^*\right|\frac{q_1\cdots q_{N-2}c}{d_1\cdots d_{N-3}},
\end{equation}	
are satisfied.
Under such divisibility conditions, $S$ can be written as a product of Gauss sums
	\[
		S = g(\chi^*, c,d_1) g(\chi^*, \tfrac{q_1c}{d_1},d_2) 
		\cdots g(\chi^*, \tfrac{q_1\cdots q_{N-3}c}{d_1 \cdots d_{N-3}}, d_{N-2})
		g(\chi^*, \tfrac{q_1\cdots q_{N-2}c}{d_1\cdots d_{N-2}},n).
	\]
\end{lemma}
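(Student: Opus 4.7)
My strategy is to peel off the $N-1$ Gauss sums one at a time, starting from the outer sum over $a$. Set $M_i := q_1 \cdots q_{i-1} c / (d_1 \cdots d_{i-1})$ for $1 \le i \le N-1$, so that $x_i$ ranges over reduced residues modulo $M_{i+1}$ and the denominators appearing in $\Kl(a, n, c; \vec q, \vec d)$ are exactly $M_1 = c, M_2, \ldots, M_{N-1}$, matching the moduli of the target Gauss sums.

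The key step is the first extraction. I will swap the order of summation to put the $a$-sum innermost, which by definition gives
\[
\sum_{a \bmod c,\, (a,c)=1} \chi(a) \, e\!\left(\frac{d_1 x_1 a}{c}\right) = g(\chi^*, c, d_1 x_1).
\]
A prime-by-prime analysis using just $(x_1, M_2) = 1$ shows $(c, d_1 x_1) = (c, d_1)$: for any prime $p \mid (x_1, c)$, the relation $v_p(M_2) = 0$ forces $v_p(d_1) \ge v_p(c)$, and both gcds then equal $v_p(c)$ at $p$. Lemma \ref{inducedgauss2} now yields
\[
g(\chi^*, c, d_1 x_1) = \overline{\chi^*}(x_1) \, g(\chi^*, c, d_1),
\]
where $\overline{\chi^*}(x_1)$ is meaningful because the hypothesis $d_1 c^* \mid q_1 c$, equivalent to $c^* \mid M_2$, forces $(x_1, c^*) = 1$; moreover $g(\chi^*, c, d_1)$ vanishes unless $c^* \mid c/(c, d_1)$, which is precisely the first divisibility in \eqref{divisibilitycondition2}. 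After substituting $y_2 = \overline{x_1} \bmod M_2$ and using $\overline{\chi^*}(\overline{y_2}) = \chi^*(y_2)$, the sum over $x_1$ reshapes to $g(\chi^*, M_2, d_2 x_2)$.

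What remains is a sum of the same shape as $S$, but with $(a, c, d_1, x_1)$ replaced by $(x_2, M_2, d_2, y_2)$ and one fewer internal $x$-variable. Iterating this procedure $N-2$ times extracts the factors $g(\chi^*, c, d_1), g(\chi^*, M_2, d_2), \ldots, g(\chi^*, M_{N-2}, d_{N-2})$ in order, with the remaining divisibilities of \eqref{divisibilitycondition2} appearing stage by stage as the non-vanishing requirements; a final step, now involving $n \overline{x_{N-2}}/M_{N-1}$ directly, contributes the last factor $g(\chi^*, M_{N-1}, n)$. The only real subtlety is the first extraction: because $x_1$ is only coprime to $M_2$ and not to $c$ in general, the naive substitution $a \mapsto a\overline{x_1}$ inside the Gauss sum is inadmissible, so one cannot simply pull $\overline{\chi}(x_1)$ out by a change of variables; the gcd identity $(c, d_1 x_1) = (c, d_1)$ is exactly what lets Lemma \ref{inducedgauss2} perform the extraction cleanly, and once that identity is in hand the iteration is essentially bookkeeping.
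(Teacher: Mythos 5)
Your proposal is correct and takes essentially the paper's route: open the hyper-Kloosterman sum, evaluate the $a$-sum as $g(\chi^*,c,d_1x_1)$, use the gcd identity $(c,d_1x_1)=(c,d_1)$ together with Lemma \ref{inducedgauss2} to extract $\overline{\chi^*}(x_1)\,g(\chi^*,c,d_1)$, substitute $x_1\to\overline{x_1}$, and iterate the same extraction through the remaining variables, collecting the vanishing conditions stage by stage. The only imprecision is calling the non-vanishing condition $c^*\mid c/(c,d_1)$ ``precisely'' the first condition of \eqref{divisibilitycondition2}: it is not literally $d_1c^*\mid q_1c$, but it implies it because $c/(c,d_1)$ divides $q_1c/d_1$ (using $d_1\mid q_1(c,d_1)$, which follows from $d_1\mid q_1c$), and this one-line implication is the direction the lemma actually needs.
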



\begin{proof}

The divisibility conditions \eqref{eq:divisibilityConditions} imply
\begin{equation}\label{divisibilitycondition3}
d_1|q_1(c,d_1),\quad  d_2\left|q_2\left(\frac{q_1c}{d_1},d_2\right),\quad d_3\right|\left. q_3\bra{\frac{q_1q_2c}{d_1d_2},d_3},\dots,
d_{N-2}\right|q_{N-2}\bra{\frac{q_1\dots q_{N-3}c}{d_1\dots d_{N-3}},d_{N-2}}.
\end{equation}
We  open up the hyper-Kloosterman sum in $S$.
Our forthcoming computation is an iterative process. The summation over $a$ yields a Gauss sum, which in turn produces  the term $\chibs(x_1)$. Then the summation over $x_1$ yields another Gauss sum, which produces the term $\chibs(x_2)$ and so on. 

Firstly we sum over $a$ modulo $c$
	\begin{align*}
		S =& \sum_{a \mod c} \chi(a) \sums_{x_1 (\text{mod }{\frac{q_1c}{d_1}})} 
		e\left(\frac{d_1x_1a}{c} \right) \left(\sums_{x_2 (\text{mod }{\frac{q_1q_2c}{d_1d_2}})}
		e\left(\frac{d_2x_2 \overline{x_1}}{\frac{q_1c}{d_1}}\right) \cdots\right)\\
		=& \sums_{x_1 (\text{mod }{\frac{q_1c}{d_1}})} g(\chi^*,c, x_1d_1)
		\left(\sums_{x_2 (\text{mod }{\frac{q_1q_2c}{d_1d_2}})}
		e\left(\frac{d_2x_2 \overline{x_1}}{\frac{q_1c}{d_1}}\right) \cdots\right).\\
	\end{align*}
	Now because  of $(c,x_1d_1) = ((c,q_1c), x_1d_1) 
	= (c, (q_1c, x_1d_1)) = (c,d_1)$, we deduce from Lemma \ref{inducedgauss2} that
	\[
		g(\chi^*,c,x_1d_1) = \overline{\chi^*(x_1)} g(\chi^*,c,d_1).
	\]
By Lemma \ref{inducedgauss2}, this Gauss sum is zero unless
	$c^*|\frac{c}{(c,d_1)}$,
	which implies the first divisibility condition of \eqref{divisibilitycondition2} because 
$c^*|\frac{c}{(c,d_1)} = \frac{d_1}{(c,d_1)}\frac{c}{d_1} | \frac{q_1c}{d_1}$ by \eqref{divisibilitycondition3}.

	Next we sum over $x_1$. Notice that $\overline{x_1}$ is its multiplicative inverse modulo $q_1c/d_1$ and 
	hence modulo $c^*$. This means that $\chi^*(\overline{x_1})= \overline{\chi^*(x_1)}$.
	We change variables in the $x_1$ summation $x_1 \to \overline{x_1}$, and change orders
	of summation to obtain
	\begin{align*}
		S&= g(\chi^*,c, d_1)\sums_{x_1 (\text{mod }{\frac{q_1c}{d_1}})} \overline{\chi^*(x_1)}
		\left(\sums_{x_2 (\text{mod }{\frac{q_1q_2c}{d_1d_2}})}
		e\left(\tfrac{d_2x_2 \overline{x_1}}{\frac{q_1c}{d_1}}\right) \cdots\right)\\
		&= g(\chi^*,c,d_1) \sums_{x_2 (\text{mod }{\frac{q_1q_2c}{d_1d_2}})} \,\,\,
		\sums_{x_1 (\text{mod }{\frac{q_1c}{d_1}})} 
		\chi^*(x_1) e\left(\tfrac{d_2x_2 x_1}{\frac{q_1c}{d_1}}\right) 
		\left(\sums_{x_3 (\text{mod }{\frac{q_1q_2q_3c}{d_1d_2d_3}})} 
		e\left(\tfrac{d_3x_3 \overline{x_2}}{\frac{q_1q_2c}{d_1d_2}}\right)\cdots \right)\\
		&= g(\chi^*,c,d_1) \sums_{x_2 (\text{mod }{\frac{q_1q_2c}{d_1d_2}})} g(\chi^*,\tfrac{q_1c}{d_1}, d_2x_2) \left(\sums_{x_3 (\text{mod }{\frac{q_1q_2q_3c}{d_1d_2d_3}})} 
		e\left(\tfrac{d_3x_3 \overline{x_2}}{\frac{q_1q_2c}{d_1d_2}}\right)\cdots \right).
	\end{align*}
	Once again the equalities $(\tfrac{q_1c}{d_1}, d_2x_2) = ((\tfrac{q_1c}{d_1},
	\tfrac{q_1q_2c}{d_1}), d_2x_2)=(\tfrac{q_1c}{d_1}, (\tfrac{q_1q_2c}{d_1}, d_2x_2))=
	(\tfrac{q_1c}{d_1}, d_2)$ imply that we can pull out $\chibs(x_2)$ from the Gauss sum. Then we have
	\[
		S = g(\chi^*,c,d_1)g(\chi^*,\tfrac{q_1c}{d_1}, d_2) \sums_{x_2 (\text{mod } {\frac{q_1q_2c}{d_1d_2}})} \overline{\chi^*(x_2)} \left(\sums_{x_3 (\text{mod }{\frac{q_1q_2q_3c}{d_1d_2d_3}})} 
		e\left(\tfrac{d_3x_3 \overline{x_2}}{\frac{q_1q_2c}{d_1d_2}}\right)\cdots \right).
	\]
	The second Gauss sum $g(\chi^*,\tfrac{q_1c}{d_1}, d_2)$ vanishes 
	 unless $c^*|\frac{q_1c/d_1}{(q_1c/d_1,d_2)}$	by Lemma \ref{inducedgauss2}. This in turn implies 
	 $c^*|\frac{q_1c/d_1}{(q_1c/d_1,d_2)}|\frac{cq_1q_2}{d_1d_2}$ by \reff{divisibilitycondition3}, which is the second divisibility condition of \reff{divisibilitycondition2}. We complete the proof after repeating this process  $(N-2)$ times.  
\end{proof}



\begin{proposition}\label{equivalent}
The equations \eqref{eq:glnVoronoi} and \eqref{eq:HequalsG} are equivalent.
\end{proposition}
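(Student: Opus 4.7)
The proposition asserts the equivalence of two versions of the Voronoi formula: the additive-twist version \eqref{eq:glnVoronoi} and the character-twist version \eqref{eq:HequalsG}. I would prove both implications by Dirichlet character orthogonality, with Lemma \ref{lem:averageGLnKloosterman} mediating between hyper-Kloosterman sums and products of Gauss sums.

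For the direction \eqref{eq:glnVoronoi}$\Rightarrow$\eqref{eq:HequalsG}, which is sketched in the paragraph preceding Theorem \ref{thm:glnAverageVoronoi}, I would multiply \eqref{eq:glnVoronoi} by $\chi(a)$ and sum over $a$ modulo $c$ with $(a,c)=1$. After the substitution $a\mapsto \overline{a}$, the identity $\sum_a \chi(a)e(\overline{a}n/c)=g(\chibs,c,n)$ produces $(c/c^*)^{1-2s}H(\vec q;c,\chi^*,s)$ on the left. On the right, I would use the symmetry $\Kl(-a,-n,c;\vec q,\vec d)=\Kl(a,n,c;\vec q,\vec d)$, obtained by flipping $x_i\mapsto -x_i$ for all $i$ (the middle exponents $d_ix_i\overline{x_{i-1}}$ are invariant while only the boundary terms $d_1x_1a/c$ and $n\overline{x_{N-2}}/(\cdots)$ change sign). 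Combined with $a\mapsto-a$ this yields $\sum_a\chi(a)\Kl(a,-n,\cdots)=\chi^*(-1)\sum_a\chi(a)\Kl(a,n,\cdots)$, so the two summands on the right of \eqref{eq:glnVoronoi} collapse to the single coefficient
\[
\frac{G_+-G_-}{2}+\chi^*(-1)\,\frac{G_++G_-}{2}\;=\;\chi^*(-1)\,G(s),
\]
where $G=G_+$ or $G_-$ according to $\chi^*(-1)=\pm 1$, matching the prefactor of \eqref{eq:Gdefn}. A direct application of Lemma \ref{lem:averageGLnKloosterman} then converts the averaged Kloosterman sum into the product of Gauss sums, and automatically strengthens the divisibility $d_i\mid\cdots$ in \eqref{eq:glnVoronoi} to $d_ic^*\mid\cdots$ in \eqref{eq:Gdefn}, since summands violating the stricter condition contribute zero.

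For the converse \eqref{eq:HequalsG}$\Rightarrow$\eqref{eq:glnVoronoi}, I would invert the averaging by orthogonality. Fix $a$ with $(a,c)=1$, multiply \eqref{eq:HequalsG} by $(c/c^*)^{1-2s}\overline{\chi(a)}/\phi(c)$, and sum over all Dirichlet characters $\chi$ modulo $c$. The left hand side returns $L_{\vec q}(s,F,a/c)$ via $\frac1{\phi(c)}\sum_\chi \overline{\chi(a)}\,g(\chibs,c,n)=e(\overline{a}n/c)$. On the right, I would split the $\chi$-sum by parity of $\chi^*(-1)$ using the refined orthogonality
\[
\frac1{\phi(c)}\sum_{\chi\text{ even}}\overline{\chi(a)}\chi(b)=\tfrac12\bigl(\mathbf{1}_{b\equiv a}+\mathbf{1}_{b\equiv -a}\bigr),\qquad \frac1{\phi(c)}\sum_{\chi\text{ odd}}\overline{\chi(a)}\chi(b)=\tfrac12\bigl(\mathbf{1}_{b\equiv a}-\mathbf{1}_{b\equiv -a}\bigr).
\]
Combined with the inversion of Lemma \ref{lem:averageGLnKloosterman} and the identity $\Kl(-a,n,\cdots)=\Kl(a,-n,\cdots)$, the even-$\chi$ contribution produces $G_+\cdot\tfrac12\bigl(\Kl(a,n,\cdots)+\Kl(a,-n,\cdots)\bigr)$ while the odd-$\chi$ contribution produces $-G_-\cdot\tfrac12\bigl(\Kl(a,n,\cdots)-\Kl(a,-n,\cdots)\bigr)$. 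Adding the two reconstructs precisely the $\frac{G_+-G_-}{2}\Kl(a,n,\cdots)+\frac{G_++G_-}{2}\Kl(a,-n,\cdots)$ structure of \eqref{eq:glnVoronoi}.

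The main obstacle is the sign bookkeeping around $\chi^*(-1)$ and the reconciliation of the two different divisibility conditions ($d_i\mid\cdots$ versus $d_ic^*\mid\cdots$). Lemma \ref{lem:averageGLnKloosterman} is designed exactly to bridge this gap, and once the parity splitting is organized properly the rest of the argument is purely formal.
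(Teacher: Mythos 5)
Your proposal is correct and follows essentially the same route as the paper: for one direction you average \eqref{eq:glnVoronoi} against $\chi(a)$ over reduced residues and invoke Lemma \ref{lem:averageGLnKloosterman} (which also supplies the strengthened divisibility $d_ic^*\mid\cdots$), and for the converse you use orthogonality of all Dirichlet characters modulo $c$. The only cosmetic difference is that you handle the $\pm n$ bookkeeping via the symmetry $\Kl(-a,-n,c;\vec{q},\vec{d})=\Kl(a,n,c;\vec{q},\vec{d})$ and a parity-split orthogonality, whereas the paper uses the relation $g(\chi^*,\cdot,-n)=\pm g(\chi^*,\cdot,n)$; both yield the same $\chi^*(-1)G(s)$ prefactor and the same reconstruction of \eqref{eq:glnVoronoi}.
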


\begin{proof}
Let $\chi$ be a Dirichlet character modulo $c$ induced from the primitive Dirichlet character $\chis$ modulo $c^*$.
Multiply both sides of \eqref{eq:glnVoronoi} by $\chi(a)$ and sum over reduced residue classes modulo $c$.
On the left hand side of \eqref{eq:glnVoronoi},  one gets $$\sum_{\substack{a\mod c\\(a,c)=1}}
\chi(a)L_{\vec{q}}(s,F,a/c)=
(c/c^*)^{1-2s}H(\vec{q}; c, \chi^*, s),$$ whereas on the right hand side of \eqref{eq:glnVoronoi}, one obtains $(c/c^*)^{1-2s} G(\vec{q}; c,\chi^*,s)$ by making use of Lemma \ref{lem:averageGLnKloosterman} and the fact that
	\[
		g(\chi^*,\tfrac{q_1\cdots q_{N-2}c}{d_1\cdots d_{N-2}},-n) = \pm g(\chi^*,\tfrac{q_1\cdots q_{N-2}c}{d_1\cdots d_{N-2}},n),
	\]
	depending on whether $\chi(-1)$ is $1$ or $-1$. This shows that  \eqref{eq:glnVoronoi} implies \eqref{eq:HequalsG}.
Conversely if we multiply both sides of \eqref{eq:HequalsG}	
by $\frac{1}{\phi(c)}\overline{\chi(a)}$ and sum over all Dirichlet characters (both primitive and non-primitive)
modulo $c$, we obtain \eqref{eq:glnVoronoi}, by using the orthogonality relation for Dirichlet characters.
Since both of the aforementioned summations that shuttle between 
 \eqref{eq:glnVoronoi} and \eqref{eq:HequalsG} 
are finite,
the properties of absolute convergence and analytic continuation are preserved.     
\end{proof}

\section*{Acknowledgements}
The authors would like to thank Dorian Goldfeld, Wenzhi Luo for helpful suggestions, Matthew Young for his extensive help in organization of the manuscript and his encouragement, and Jeffrey Hoffstein, in whose class the seed for this work originated.

\bibliographystyle{alpha}
\bibliography{bibib}

$$$$
\noindent {\scshape{Eren Mehmet K{\i}ral   }} \\
Department of Mathematics\\
Texas A\&M University\\
College Station, TX 77843, USA \\
ekiral@math.tamu.edu
$$\;$$
\noindent {\scshape{Fan Zhou}} \\
Department of Mathematics\\
The Ohio State University\\
Columbus, OH 43210, USA\\
zhou.1406@math.osu.edu

\end{document}